\newtheorem{thm}{Theorem}
\newtheorem{defn}{Definition}
\newtheorem{cor}[thm]{Corollary}
\newtheorem{lemma}[thm]{Lemma}
\newtheorem{prop}[thm]{Proposition}
\newtheoremstyle{example}{\topsep}{\topsep}%
 {}
 {}
 {\bfseries}
 {.}
 { }
 {}
\theoremstyle{example}
\newcommand{\R}{\mathbb{R}}
\newcommand{\Z}{\mathbb{Z}}
\renewcommand{\hat}[1]{\widehat{#1}}
\renewcommand{\vec}[1]{{#1}}
\newcommand{\abs}[1]{\left| {#1} \right|}
\renewcommand{\chi}{\mathds{1}}
\newcommand{\spn}{\textnormal{span}}
\newcommand{\cnt}{\textnormal{Count}}
\title{{A higher moment formula for the Siegel--Veech transform over quotients by Hecke triangle groups}}
\author{Samantha~K.~Fairchild}
\date{}
\numberwithin{equation}{section}
\numberwithin{thm}{section}
\begin{document}
\pagestyle{fancy}

\maketitle
\abstract{
We compute higher moments of the Siegel--Veech transform over quotients of $SL(2,\R)$ by the Hecke triangle groups. After fixing a normalization of the Haar measure on $SL(2,\R)$ we use geometric results and linear algebra to create explicit integration formulas which give information about densities of $k$-tuples of vectors in discrete subsets of $\R^2$ which arise as orbits of Hecke triangle groups. This generalizes work of W.~Schmidt on the variance of the Siegel transform over $SL(2,\R)/SL(2,\Z)$.} 


\section{Introduction}
The Siegel--Veech transform maps a function on $\R^2$ to a function on sets of translation surfaces. This powerful transformation gives information about the asymptotic density of saddle connections \cite{Veech98} and and cusp excursions \cite{A-M_Log_Laws_I}. On connected strata of translation surfaces, the Siegel--Veech transform is integrable \cite{Veech89} and in $L^2$ with respect to the Masur--Veech measure \cite{AthreyaCheungMasurL2}. In \cite{Veech89}, Veech also showed that the Siegel--Veech transform is integrable over closed $SL(2,\R)$ orbits of Veech surfaces with respect to the induced Haar measure. Building on work of Siegel, Schmidt, and Rogers \cite{Siegel45, Schmidt60, Rogers55} we compute higher moments of the Siegel--Veech transform over sets of surfaces with the Hecke triangle groups as their stabilizer group. 

The Hecke triangle group $H_q$ for integers $q\geq 3$ is the discrete subgroup of $SL(2,\R)$ generated by
		$$S = \begin{bmatrix}
			0 & -1 \\ 1 & 0
		\end{bmatrix} \quad \text{and} \quad T = \begin{bmatrix}
			1 & \lambda_q \\ 0 & 1
		\end{bmatrix}, \text{ where } \lambda_q = 2\cos\left(\frac{\pi}{q}\right).$$
Note $H_3 = SL(2,\Z)$ and for all $q\geq 3$, $H_q$ has finite co-volume in $SL(2,\R)$. For more information on Hecke triangle groups see \cite{Lang_Lang}. 

Let $V_q$ be the discrete subset of $\R^2$ defined by $$V_q = H_q\cdot \begin{bmatrix} 1 \\ 0 \end{bmatrix},$$ which corresponds to a subset of saddle connections of a translation surface when $q$ is odd (see section~\ref{sec:Hecke_Triangle_As_Veech}). Define $Y_q = SL(2,\R) / H_q$ with corresponding Haar probability measure $\mu$. Let $B_c((\R^2)^k)$ be the set of bounded measurable functions with compact support on $(\R^2)^k$.
\begin{defn}\label{def:S--VTransform}
For $f\in B_c((\R^2)^k)$ and, by abuse of notation $g = [g] \in Y_q$, we define the \textit{Siegel--Veech transform} by 
		$$\hat{f}(g) = \sum_{(v_1,\ldots, v_k)\in V_q^k} f(g(v_1,\ldots,v_k)).$$
\end{defn}
In the above definition $k=1$ is the classical Siegel--Veech transform, and for particualar $f \in B_c((\R^2)^k)$ of the form $f(x_1,\cdots, x_k) = h(x_1)\cdots h(x_k)$ for $h \in B_c(\R^2)$, $\hat{f}$ corresponds to the $k$th power of the classical Siegel--Veech Transform of $h$ on $\R^2$. Veech proved that the classical Siegel--Veech transform is integrable with the following formula from section 16 of \cite{Veech98}.
\begin{thm} \label{thm:Veech}
			For $f \in B_c(\R^2)$,
			$$\int_{Y_q} \hat{f}(g)\,d\mu(g) = \frac{1}{c(q)} \int_{\R^2} f(x) \,dx,$$
			where the Siegel--Veech constant is given by $$c(q) \stackrel{def}{=} \pi \left(\pi - \frac{\pi}{q} - \frac{\pi}{2}\right).$$ 
		\end{thm}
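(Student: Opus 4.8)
The plan is to prove this by an unfolding argument that collapses the integral over $Y_q = SL(2,\R)/H_q$ down to an integral over $\R^2$. First I would record the group theory behind the orbit $V_q$. Since $T\,[1\ 0]^{\mathsf T} = [1\ 0]^{\mathsf T}$, the stabilizer of $e_1 := [1\ 0]^{\mathsf T}$ inside $H_q$ is exactly the infinite cyclic group $\Gamma_\infty = \langle T\rangle = H_q \cap N$, where $N$ is the full group of upper-triangular unipotent matrices, which is the stabilizer of $e_1$ in all of $SL(2,\R)$. Hence the orbit map $h\mapsto h e_1$ identifies $V_q$ with the coset space $H_q/\Gamma_\infty$. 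I would then check that $\hat f$ really is a function on $Y_q$: for $h\in H_q$ the set $hV_q=V_q$, so $\hat f(gh)=\hat f(g)$ and the integrand descends to the quotient.

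The heart of the argument is a two-step unfolding along the chains $\Gamma_\infty \subset H_q \subset SL(2,\R)$ and $\Gamma_\infty \subset N \subset SL(2,\R)$. Setting $\phi(g\Gamma_\infty)=f(ge_1)$, which is well defined because $\Gamma_\infty$ fixes $e_1$, Weil's integration formula for the fibration $SL(2,\R)/\Gamma_\infty \to Y_q$ with discrete fiber $H_q/\Gamma_\infty$ gives
\begin{equation*}
\int_{Y_q}\hat f(g)\,d\bar g = \int_{SL(2,\R)/\Gamma_\infty} f(ge_1)\,d\bar g',
\end{equation*}
where $d\bar g,d\bar g'$ are induced from a single fixed Haar measure. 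Because $f(ge_1)$ is right $N$-invariant, a second application of Weil's formula for $SL(2,\R)/\Gamma_\infty \to SL(2,\R)/N$ with fiber $N/\Gamma_\infty$ factors out the cusp width $\mathrm{vol}(N/\Gamma_\infty)=\lambda_q$ and leaves an integral over $SL(2,\R)/N$. Finally I would use that $gN\mapsto ge_1$ is an $SL(2,\R)$-equivariant bijection $SL(2,\R)/N \to \R^2\setminus\{0\}$; since $SL(2,\R)$ acts transitively and preserves Lebesgue measure (every element has determinant $1$), the invariant measure there is a constant multiple of $dx$. This produces $\int_{Y_q}\hat f\,d\bar g = (\text{const})\int_{\R^2} f\,dx$, and rescaling $d\bar g$ by its total mass $\mathrm{covol}(H_q)=\int_{Y_q} d\bar g$ replaces it with the probability measure $\mu$.

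It remains to identify the constant, which is where $c(q)$ enters and which I expect to be the main obstacle, since it forces one to reconcile three separate normalizations: the chosen Haar measure on $SL(2,\R)$, the Lebesgue pushforward on $\R^2\setminus\{0\}$, and the total mass $\mathrm{covol}(H_q)$. For the covolume I would take the standard fundamental domain $\{\,|z|>1,\ |\mathrm{Re}\,z|<\lambda_q/2\,\}$ for $H_q$ acting on the upper half-plane; a direct computation $\int \frac{dx\,dy}{y^2}$ (equivalently Gauss--Bonnet) gives hyperbolic area $2\arcsin(\cos(\pi/q)) = \pi - \frac{2\pi}{q} = 2\bigl(\frac{\pi}{2}-\frac{\pi}{q}\bigr)$, the area of two $(\frac{\pi}{2},\frac{\pi}{q},0)$-triangles, and folding in the compact $SO(2)$-direction (with a factor $\frac12$ coming from $-I=S^2\in H_q\cap SO(2)$) converts this into $\mathrm{covol}(H_q)$. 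The delicate point is that the cusp width $\lambda_q=2\cos(\pi/q)$ pulled out in the second unfolding must be tracked against the normalizations of $dg$ and of the vector $e_1$ (as in the translation-surface picture of Section~\ref{sec:Hecke_Triangle_As_Veech}), compatibly with Veech's conventions in \cite{Veech98}, so that this contribution is absorbed and the moment is computed against precisely $c(q)=\pi\bigl(\pi-\frac{\pi}{q}-\frac{\pi}{2}\bigr)$. This normalization matching, rather than the unfolding itself, is the crux; as a sanity check, at $q=3$ one has $H_3=SL(2,\Z)$, $\lambda_3=1$, and $c(3)=\pi^2/6=\zeta(2)$, recovering Siegel's classical mean value.
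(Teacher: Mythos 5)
The first thing to note is that the paper contains no proof of Theorem~\ref{thm:Veech} at all: it is imported verbatim from section 16 of \cite{Veech98}, so there is no in-paper argument to compare against, and your unfolding outline is the standard route one would take. The skeleton of your argument is fine: $\mathrm{Stab}_{H_q}(e_1)=H_q\cap N=\langle T\rangle$ (this is the classical cusp-width fact, visible from the fundamental domain $|\mathrm{Re}\,z|\le\lambda_q/2$, $|z|\ge 1$), the identification $V_q\cong H_q/\Gamma_\infty$, the two applications of Weil's formula along $\Gamma_\infty\subset H_q$ and $\Gamma_\infty\subset N$, and the identification of $SL(2,\R)/N\cong\R^2\setminus\{0\}$ carrying a multiple of Lebesgue measure are all correct and routine, as is your covolume computation $\mathrm{area}(\bH/H_q)=\pi-\frac{2\pi}{q}$.

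The genuine gap is exactly the step you defer: you assert that the cusp width $\lambda_q$ extracted in the second unfolding ``is absorbed'' by the normalization matching, but you never carry this out, and under the normalizations fixed in this paper it does \emph{not} cancel. Run your own outline in the paper's coordinates $g=\begin{bmatrix}1&0\\ s&1\end{bmatrix}\begin{bmatrix}a&b\\ 0&a^{-1}\end{bmatrix}$ with $d\eta=da\,db\,ds$: then $ge_1=(a,sa)^{\mathsf{T}}$, right multiplication by $T$ sends $b\mapsto b+a\lambda_q$ while fixing $(a,s)$, so a fundamental domain for $\Gamma_\infty$ has $b\in[0,|a|\lambda_q)$ and
\begin{equation*}
\int_{Y_q}\hat f\,d\eta_* \;=\; \int_{SL(2,\R)/\Gamma_\infty} f(ge_1)\,d\eta \;=\; \lambda_q\int\!\!\int |a|\,f\bigl((a,sa)^{\mathsf{T}}\bigr)\,da\,ds \;=\; \lambda_q\int_{\R^2} f\,dx,
\end{equation*}
using the substitution $(a,s)\mapsto(a,sa)$. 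Since, as you compute and as the paper records, $\eta_*(Y_q)=\frac{\pi}{2}\bigl(\pi-\frac{2\pi}{q}\bigr)=c(q)$, dividing by the total mass yields $\int_{Y_q}\hat f\,d\mu=\frac{\lambda_q}{c(q)}\int_{\R^2}f\,dx$: the cusp width survives. The stated constant $\frac{1}{c(q)}$ emerges only after a further convention --- for instance replacing the orbit of $e_1$ by the orbit of $\sqrt{\lambda_q}\,e_1$, in line with Veech's holonomy/area normalizations, which rescales the count by $\lambda_q^{-1}$ --- and your sanity check at $q=3$ is blind to this issue because $\lambda_3=1$. So the proof is incomplete precisely at its declared crux: you must either exhibit the claimed cancellation (which, with standard Lebesgue $dx$ and $V_q=H_q\cdot e_1$ as defined in this paper, does not occur) or identify explicitly and verify the normalization convention under which the constant is exactly $c(q)$; until then the argument does not establish the theorem as stated.
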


		We will first prove the following theorem which computes the square of the classical Siegel--Veech transform on $B_c(\R^2)$. To state the theorem, we introduce the following two definitions:
		\begin{defn}[Set of non-vanishing determinants]
			Let			
			\begin{align} \label{eq:N_q}
				N_q &\stackrel{def}{=} \{n \in \Z[\lambda_q]\setminus\{0\}: \text{ there exists } \vec{v_1}, \vec{v_2} \in V_q \text{ with } \det(\vec{v_1}\,\vec{v_2}) = n\}\nonumber\\
		&= \left\{n\in \Z[\lambda_q]\setminus\{0\} : \text{ there exists } 0\leq m < \lambda_q |n| \text{ with } \begin{bmatrix}
			m \\ n
		\end{bmatrix} \in V_q\right\},
		\end{align}
		\end{defn}
		where Equation~\ref{eq:N_q} will be proved in Lemma~\ref{lem:N_q equality}. 
		\begin{defn}[$q$-geometric Euler totient function]
			For $b \in \Z[\lambda_q]$ define
			$$\varphi_q(b) = \#\left\{1\leq a\leq \lambda_q\abs{b}: \begin{bmatrix} a\\ b \end{bmatrix} \in V_q\right\}.$$
		\end{defn}	
		Note that $\lambda_3 = 1$ and $V_3 = SL(2,\Z)\cdot e_1$ so $\varphi_3$ reduces to the standard Euler totient function.
		
\begin{thm} \label{thm:The_Big_Kahuna}
		Let $f \in B_c(\R^2\times \R^2)$, $N_q$ be the set of non-vanishing determinants, and $\varphi_q$ the \textit{$q$-geometric Euler totient function}.
		Then,
		\begin{equation}
		\int_{Y_q} \hat{f} \,d\mu(g)= \sum_{n\in N_q} \frac{\varphi_q(n)}{c(q)}\int_{SL(2,\R)} f\left(g J_n\right) \,d\eta + \frac{1}{c(q)} \int_{\R^2} \left( f(x, -x) + f(x,x) \right)\,dx
		\label{eq:Full_Hecke_Formula}
		\end{equation}
		where $J_n = \begin{bmatrix} 1 & 1\\ 0 & n \end{bmatrix}$, $\mu$ is the Haar probability measure on $Y_q$, $\eta$ is Haar measure on $SL(2,\R)$ normalized so $\eta(Y_3) = \frac{\pi^2}{6}$, and $dx$ is the Lebesgue measure on $\R^2$ normalized so the area of the unit square is 1. 
	\end{thm}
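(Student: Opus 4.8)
The plan is to expand the definition of $\hat f$ and reorganize the double sum over $V_q^2$ according to the orbits of the diagonal action of $H_q$. Set $e_1=\big(\begin{smallmatrix}1\\0\end{smallmatrix}\big)$, so $V_q=H_q e_1$. Writing $\hat f(g)=\sum_{(v_1,v_2)\in V_q^2} f(gv_1,gv_2)$, I would first note that this descends to $Y_q$ because $hV_q=V_q$ for every $h\in H_q$, and then split $V_q^2$ into its \emph{degenerate} part (pairs with $\det(v_1\,v_2)=0$) and its \emph{non-degenerate} part. For a fixed $H_q$-orbit with representative $(w_1,w_2)$ and stabilizer $\Gamma=\mathrm{Stab}_{H_q}(w_1,w_2)$, the function $F(g)=f(gw_1,gw_2)$ is right $\Gamma$-invariant, so the unfolding identity
\[
\int_{Y_q}\sum_{h\in H_q/\Gamma} F(gh)\,d\mu(g)=\frac{1}{\eta(Y_q)}\int_{SL(2,\R)/\Gamma} F\,d\eta
\]
computes each orbit's contribution. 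Here I would use that the probability measure $\mu$ is $\eta$ restricted to a fundamental domain and renormalized, together with the fact that, under the given normalization of $\eta$, the $\eta$-covolume of $H_q$ equals Veech's constant $c(q)$ (consistent with $\eta(Y_3)=\pi^2/6=c(3)$); this accounts for every factor of $1/c(q)$ in the final formula.

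For the degenerate part, the key structural input is that the only scalar multiples of a vector $v\in V_q$ that again lie in $V_q$ are $v$ and $-v$. This follows because the cusp stabilizer $\mathrm{Stab}_{H_q}(e_1)=\langle T\rangle$ scales the line $\R e_1$ only by $\pm1$ (and $-I=S^2\in H_q$), so the degenerate pairs are exactly $(v,v)$ and $(v,-v)$ with $v\in V_q$, forming two $H_q$-orbits with representatives $(e_1,e_1)$ and $(e_1,-e_1)$. I would then recognize $\sum_{v\in V_q} f(gv,gv)$ and $\sum_{v\in V_q} f(gv,-gv)$ as the ordinary (single-vector) Siegel--Veech transforms of $F_+(x)=f(x,x)$ and $F_-(x)=f(x,-x)$ and apply Theorem~\ref{thm:Veech} directly, producing the term $\tfrac{1}{c(q)}\int_{\R^2}\big(f(x,x)+f(x,-x)\big)\,dx$.

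For the non-degenerate part, given a pair with $n=\det(v_1\,v_2)\ne0$, I would act by a suitable $h\in H_q$ to normalize $v_1=e_1$, the residual ambiguity being exactly $\langle T\rangle$. Writing the normalized pair as the matrix $M_a=\big(\begin{smallmatrix}1 & a\\ 0 & n\end{smallmatrix}\big)$ with $v_2=(a,n)^{\top}\in V_q$, existence of such a pair is precisely the condition $n\in N_q$ by Equation~\ref{eq:N_q}. Since $v_1,v_2$ are now linearly independent, the stabilizer of the pair is trivial, so the unfolding integrates over all of $SL(2,\R)$. Crucially, $M_a=\big(\begin{smallmatrix}1 & (a-1)/n\\ 0 & 1\end{smallmatrix}\big)J_n$ is a left unipotent translate of $J_n$, so by right-invariance of $\eta$ one obtains $\int_{SL(2,\R)} f(gM_a)\,d\eta=\int_{SL(2,\R)} f(gJ_n)\,d\eta$ independently of $a$. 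Thus all orbits of a fixed determinant $n$ contribute the same integral, and the whole non-degenerate part collapses to $\sum_{n\in N_q}\frac{(\#\text{orbits})}{c(q)}\int_{SL(2,\R)} f(gJ_n)\,d\eta$.

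The remaining and most delicate step is to show that the number of $\langle T\rangle$-orbits on $\{(a,n)^{\top}\in V_q\}$ equals $\varphi_q(n)$. Since $T^k$ sends $a\mapsto a+k\lambda_q n$, these orbits are indexed by the admissible values of $a$ modulo $\lambda_q n$, and matching this against the definition $\varphi_q(n)=\#\{1\le a\le\abs{n}:(a,n)^{\top}\in V_q\}$ requires analyzing the arithmetic of $\Z[\lambda_q]$ and of which vectors land in $V_q$; this is exactly where Lemma~\ref{lem:N_q equality} and Equation~\ref{eq:N_q} enter, and where the generalization away from the classical coprimality picture (the case $q=3$, $\lambda_3=1$) is genuinely nontrivial. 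I expect this orbit count, rather than the unfolding or the measure bookkeeping, to be the main obstacle. Assembling the degenerate and non-degenerate contributions then yields Equation~\ref{eq:Full_Hecke_Formula}.
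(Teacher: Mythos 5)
Your proposal is correct in substance and, at its computational core, coincides with the paper's proof: your degenerate part is exactly Lemma~\ref{lem:Siegel_Linearly_Dependent_Case} (recognize $\sum_{v}f(gv,\pm gv)$ as the classical transform of $f(x,\pm x)$ and apply Theorem~\ref{thm:Veech}), and your non-degenerate part reproduces Lemmas~\ref{lem:Decompose_Into_SL_2_Z_Orbits} and~\ref{lem:Fibration_On_One_Orbit}, including the key trick that $\bigl(\begin{smallmatrix}1 & m\\ 0 & n\end{smallmatrix}\bigr)$ is a left unipotent translate of $J_n$, so right-invariance of $\eta$ makes all determinant-$n$ orbits contribute equally. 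Where you genuinely diverge is the framing. The paper does not unfold directly: it invokes the Riesz representation theorem to get an $SL(2,\R)$-invariant measure $\nu$ on $\R^2\times\R^2$ representing $f\mapsto\int_{Y_q}\hat f\,d\mu$, classifies the invariant measures on each orbit of Lemma~\ref{lem:Decomposition_Into_Orbits} ($\delta_0$, Lebesgue on $LD_t$, Haar on $D_n$), and kills the unwanted coefficients ($a$, $b_t$ for $t\neq\pm1$, $c_n$ for $n\notin N_q$) by testing against characteristic functions supported on single orbits. Your direct unfolding buys a leaner argument in which the measures and their normalizations fall out of the computation itself; its costs are the points you must justify separately: the rearrangement (take $f\geq 0$ and use Tonelli, with finiteness from the uniform bound on $\hat f$, Veech's Lemma~16.10, cited in the paper), the covolume identification $\eta$-covolume of $H_q$ equal to $c(q)$ (which the paper also fixes via the Poincar\'e-volume normalization), and your stabilizer facts. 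On the last point, note a small imprecision: $\mathrm{Stab}_{H_q}(e_1)=\langle T\rangle$ fixes $e_1$, so it scales by $1$; what the degenerate part needs is that no element of $H_q$ preserves the line $\R e_1$ with scaling $t\neq\pm1$, which holds because a hyperbolic element cannot share the fixed point of the parabolic $T$ in a discrete group; the paper instead derives this from $(ta,tb)_q=t(a,b)_q$ together with Proposition~\ref{prop:H_q_Characterization}, and you should either import that argument or spell out the discreteness one.

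The step you defer --- that the number of $\langle T\rangle$-orbits of admissible $a$'s equals $\varphi_q(n)$ --- is indeed the crux, and your mod-$\lambda_q n$ bookkeeping is actually more careful than the paper's treatment. The paper splits it into Lemma~\ref{lem:N_q equality} (every determinant-$n$ pair in $V_q^2$ is $H_q$-equivalent to $(e_1,(m,n)^T)$ with $1\leq m\leq|n|$) and the distinctness half of Lemma~\ref{lem:Decompose_Into_SL_2_Z_Orbits}; the distinctness half is immediate from your indexing, since distinct $m_1,m_2\in[1,|n|]$ differ by less than $\lambda_q|n|$ and hence lie in different residue classes mod $\lambda_q n$. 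The surjectivity half is where your flagged worry genuinely bites: as you observe, $T^{\pm1}$ shifts $m$ by $\lambda_q n$ (the proof of Lemma~\ref{lem:N_q equality} writes the shift as $j\lambda_q$, but acting on $\bigl(\begin{smallmatrix}1 & \ell\\ 0 & n\end{smallmatrix}\bigr)$ it is $j\lambda_q n$), so a $T$-orbit automatically meets a window of length $|n|$ only when $\lambda_q=1$, i.e., $q=3$; for $q>3$ the natural fundamental domain has length $\lambda_q|n|$, exactly as your count predicts. So, granting Lemma~\ref{lem:N_q equality} as the definitions preceding the theorem do, your plan closes and yields Equation~\ref{eq:Full_Hecke_Formula}; but be aware that the step you deferred is not a routine verification --- it is precisely the point where the paper's own argument is thinnest, and any complete write-up along your lines must either prove the window reduction for $\lambda_q>1$ or adjust the counting function to count residues modulo $\lambda_q n$.
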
	
	
	Note $\hat{f}$ is uniformly bounded by Lemma~16.10 of \cite{Veech98}, so both sides of Equation~\ref{eq:Full_Hecke_Formula} are finite.  The proof of Theorem~\ref{thm:The_Big_Kahuna} will use Schmidt's outline of proof (see section \ref{sec:Siegel-Rogers}). It is a useful exercise to consider this proof in the case of Schmidt with $q = 3$. That is where $N_3 = \Z\setminus \{0\}$ and the constant $c(3) = \frac{\pi^2}{6} = \zeta(2)$. In section~\ref{sec:NumEv} we will see how the formula in Theorem~\ref{thm:The_Big_Kahuna} allows us to understand the asymptotic densities of saddle connections of translation surfaces with Veech group $H_q$ for $q$ odd. Theorem~\ref{thm:The_Big_Kahuna} is in fact a special case of the main theorem, which calculates the $k$th moment of the classical Siegel--Veech transform.
	
	\begin{thm}
	\label{thm: Higher Moments}
	Let $f\in B_c((\R^2)^k)$ and define 
	$$J_{n,m} =  \begin{bmatrix}
	1 & m \\ 0 & n
\end{bmatrix}.$$ Then
	\begin{align*}
		&\int_{Y_q} \hat{f} \,d\mu(g)\\
		 &= \sum_{\stackrel{\lambda \in \R^k}{\lambda = (1,\pm 1,\ldots,\pm 1)}} \frac{1}{c(q)} \int_{\R^2} f(\lambda x)\,dx \\
		&+ \sum_{n\in N_q} \sum_{\stackrel{0\leq m< \lambda_q|n|}{(m,n)^T\in V_q}} \sum_{1\leq j < k}\sum_{\lambda, \alpha, \beta} \frac{1}{c(q)} \int_{SL(2,\R)}f\left(\lambda g \begin{bmatrix} 1\\0 \end{bmatrix},g J_{n,m}\begin{bmatrix}
		\alpha \\ \beta
\end{bmatrix} \right)\,d\eta(g).\\
	\end{align*}
	where for each $1\leq j <  k$ we have $\lambda \in \R^j$ is of the form $(1,\pm 1,\ldots, \pm 1)$ and $\alpha = (0,\alpha_2,\ldots, \alpha_{k-j})$ and $\beta = (1,\beta_2, \ldots, \beta_{k-j})$ where for each $2\leq i\leq k-j$ we have
\begin{equation}\label{eq:alphabeta_Criterion}
\begin{bmatrix}
	\alpha_i\\ \beta_i 
\end{bmatrix} \in J_{n,m}^{-1} V_q.
\end{equation}
\end{thm}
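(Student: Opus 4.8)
\emph{Proof sketch.} The plan is to expand the definition of $\hat f$, partition the $k$-tuples in $V_q^k$ by their linear-algebraic type, and unfold the integral over $Y_q$ one type at a time. Since $\hat f$ is uniformly bounded (Lemma~16.10 of \cite{Veech98}) and $f$ is compactly supported, every interchange of summation and integration below is justified by Fubini--Tonelli. The primary split is by rank: because no coordinate of a tuple in $V_q^k$ vanishes, a tuple $(v_1,\ldots,v_k)$ either spans a single line (rank $1$) or all of $\R^2$ (rank $2$).

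For the rank-$1$ tuples I would first record that two parallel vectors of $V_q$ differ only by a sign; writing $e_1=\begin{bmatrix}1\\0\end{bmatrix}$ and $e_2=\begin{bmatrix}0\\1\end{bmatrix}$, this follows because the subgroup of $H_q$ preserving the line $\R e_1$ is generated by $T$ and $-I$ (see \cite{Lang_Lang}). Hence each rank-$1$ tuple equals $\lambda v=(v,\epsilon_2 v,\ldots,\epsilon_k v)$ for a unique $v\in V_q$ and sign vector $\lambda=(1,\pm1,\ldots,\pm1)$, and for each fixed $\lambda$ the map $x\mapsto f(\lambda x)$ belongs to $B_c(\R^2)$. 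Applying Theorem~\ref{thm:Veech} to these $2^{k-1}$ functions yields precisely the first sum $\sum_{\lambda}\frac{1}{c(q)}\int_{\R^2}f(\lambda x)\,dx$.

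The rank-$2$ tuples carry the real content. I would partition them by the index $j$ with $1\le j<k$ singled out by the condition that $v_1,\ldots,v_j$ are all parallel to $v_1$ while $v_{j+1}$ is independent of $v_1$; since $j$ is determined by the tuple this is an honest partition, and the signs of the parallel block give a vector $\lambda=(1,\pm1,\ldots,\pm1)\in\R^j$. Setting $n=\det(v_1\, v_{j+1})$, the decisive step is the classification of the diagonal $H_q$-orbits of linearly independent pairs in $V_q\times V_q$: using $V_q=H_q e_1$, that $\mathrm{Stab}_{H_q}(e_1)=\langle T\rangle$, and the reduction built into the definitions of $N_q$ and $\varphi_q$ together with Lemma~\ref{lem:N_q equality}, I would show that every such pair is carried by a \emph{unique} $\gamma\in H_q$ to a normal form $\left(e_1,\begin{bmatrix}m\\n\end{bmatrix}\right)$ with $n\in N_q$, $1\le m\le\abs{n}$, and $(m,n)^T\in V_q$, and that the stabilizer of such a pair is trivial. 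Verifying that the representative can be normalized into the range $1\le m\le\abs{n}$ --- where the spacing $\lambda_q n$ of the cusp stabilizer must be reconciled with an interval of length $\abs{n}$ --- is exactly the point at which the geometry of $V_q$ beyond the lattice case $q=3$ enters, and I expect this to be the main obstacle.

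Finally I would unfold. With $(j,n,m)$ and the parallel signs fixed, the coordinates $v_{j+2},\ldots,v_k$ still range over all of $V_q$; substituting $v_{j+i}\mapsto\gamma^{-1}v_{j+i}$ (valid since $H_q$ preserves $V_q$) makes every argument of $f$ depend on $g\gamma$ alone, so that $\int_{Y_q}\sum_{\gamma\in H_q}(\,\cdot\,)\,d\mu$ unfolds to $\frac{1}{c(q)}\int_{SL(2,\R)}(\,\cdot\,)\,d\eta$, the factor $\frac{1}{c(q)}$ coming from $\mu=\frac{1}{c(q)}\eta$ on a fundamental domain as in Theorem~\ref{thm:Veech} and from the pair orbit being free. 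Writing $v_{j+i}=J_{n,m}\begin{bmatrix}\alpha_i\\\beta_i\end{bmatrix}$ converts the free sum into the constraint $(\alpha_i,\beta_i)^T\in J_{n,m}^{-1}V_q$ with $\alpha=(0,\alpha_2,\ldots)$ and $\beta=(1,\beta_2,\ldots)$, while $g e_1$ and $gJ_{n,m}e_2=g\begin{bmatrix}m\\n\end{bmatrix}$ supply the two base vectors. Summing over $j$, over $n\in N_q$, over the admissible $m$, and over the choices of $\lambda,\alpha,\beta$ produces the second sum and hence the stated identity; specializing to $k=2$ recovers Theorem~\ref{thm:The_Big_Kahuna}, in which the $m$-dependence of the integral disappears after a left translation and leaves only the multiplicity $\varphi_q(n)$.
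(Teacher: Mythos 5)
Your proposal is correct and follows essentially the same route as the paper: the rank-$1$/rank-$2$ partition with sign vectors $\lambda$, the reduction of the parallel block to Theorem~\ref{thm:Veech}, the normal form $J_{n,m}$ for linearly independent pairs, and the unfolding via $\mu = c(q)^{-1}\eta$ correspond exactly to Lemma~\ref{lem: Higher moment decomp into oribts}, Lemma~\ref{lem:Higher moment decomp}, Lemma~\ref{lem:Higher Moment lin. dependent}, and Lemma~\ref{lem:Higher moment lin.independent}, which in turn rest on the Section~\ref{sec:Orbits and integrals} lemmas you cite. The one difficulty you flag --- reconciling the cusp-stabilizer spacing $\lambda_q n$ with the normalization $1\leq m\leq \abs{n}$ --- is precisely what the paper dispatches in Lemma~\ref{lem:N_q equality} and Lemma~\ref{lem:Decompose_Into_SL_2_Z_Orbits}, so invoking those results closes your sketch.
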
 
	\subsection{Outline}
		In Section~\ref{sec:Background and History} we give an overview of the history of the problem, followed by the necessary background on translation surfaces, Veech groups, and the geometric Euler totient function. In Section~\ref{sec:Orbits and integrals} we prove Theorem~\ref{thm:The_Big_Kahuna}, followed by Section~\ref{sec:Higher Moments} where we prove Theorem~\ref{thm: Higher Moments}. Finally in Section~\ref{sec:NumEv} we explain how we found numerical evidence for the result.
\subsection{Acknowledgments}
		I thank Jayadev Athreya for proposing the project and many useful discussions. I thank Bianca Viray and Claire Burrin for useful comments and discussions about the totient function. Thanks to Anthony Sanchez for useful comments on the paper. Finally I would like to thank Kimberly Bautista, Maddy Brown, and Andrew Lim of the Washington Experimental Mathematics Lab for their contributions to my numerical experiments and discussion in generalizing to higher moments.
\section{Background and history} \label{sec:Background and History}
	We first give a summary of previous related results in the geometry of numbers, followed by background on translation surfaces, Veech groups, and the $q$-geometric Euler totient function.
	
\subsection{Geometry of numbers}
\label{sec:Siegel-Rogers}
We will first focus on the mean and variance of the primitive Siegel transform, which is a special case of the Siegel--Veech transform defined in the previous section. First we set up some notation and definitions, then state the theorems of Siegel, Rogers, and Schmidt computing the mean and variance of the primitive Siegel transform.
	
	Consider $f\in B_c(\R^d)$. We aim to understand $f$ evaluated on \textit{visible} lattice points in $\R^d$, where a point $\vec{v} = (v_1,\ldots, v_d)^T\in\Z^d$ is \textit{primitive} or \textit{visible} if $\gcd(v_1,\ldots,v_d) = 1$. We denote the set of primitive vector points by $\Z^d_{prim}$, which one can show $\Z^d_{prim} = SL(d,\Z)\cdot \begin{bmatrix}
		1 \\ 0
	\end{bmatrix}$. Define $X_d = SL(d,\R)/SL(d,\Z)$. By abuse of notation, for an equivalence class $g = [g] \in X_d$, we define the \textit{primitive Siegel transform} by $$\hat{f}(g) = \sum_{\vec{v} \in \Z^d_{prim}} f(gv).$$  
	
	In 1945, Siegel \cite{Siegel45}, sections 5-6 showed
	\begin{equation}
		\int_{X_d} \hat{f}(g) \,d\mu(g) = \frac{1}{\zeta(d)}\int_{\R^d} f(x) \,dx
		\label{eq:Siegels_Primitive_Integral_Theorem}
	\end{equation}
	where the standard Lebesgue measure on $\R^d$ is $dx$, $\zeta$ is the Riemann zeta function, and $\mu$ is probability Haar measure on $X_d$. 
	
	In order to understand higher moments of $\hat{f}$, we split into the cases where $d = 2$ and $d > 2$. We address the latter case first.
	
	For understanding higher moments of $\hat{f}$, C.~A.~Rogers' 1955 paper \cite{Rogers55}, Theorem 5 solved the case for $\hat{f}^k$ with $d > 2$ and $k< d$. For simplicity, we will only consider the case $k =2$  of Rogers' result. Recall for $f \in B_c(\R^d)$, and defining $h\in B_c(\R^2\times \R^2)$ by $h(x,y) = f(x)f(y)$ we have
	$$\sum_{v_1,v_2 \in \Z^d_{prim}} h(gv_1, gv_2) = \sum_{v_1,v_2 \in \Z^d_{prim}} f(gv_1) f(gv_2) =\left[ \sum_{v \in \Z^d_{prim}} f(gv)\right]^2 = (\hat{f})^2.$$
	Rogers showed that for $f \in B_c(\R^d)$, and $h(x,y) = f(x)f(y)$, the second moment of $f$ is given by 
	\begin{align} \label{eq:RogersFormula}& \int_{X_d} (\hat{f})^2(g)	\,d\mu(g) =\int_{X_d} \sum_{v_1,v_2 \in \Z^d_{prim}} h(gv_1,gv_2) \,d\mu(g) \nonumber \\
	&\hspace{.25in}= \frac{1}{\zeta(d)^2} \int_{\R^d\times \R^d} h(x,y) \,dx\,dy + \frac{1}{\zeta(d)} \int_{\R^d} \left[h(x,x) + h(x,-x) \right]\,dx.
	\end{align}
	For a modern proof of Equation~\ref{eq:RogersFormula}, see section 4 of \cite{A-M_Log_Laws_I}. 
	
	For $k \geq d >2$ and $f\in B_c(\R^d)$, the function $\hat{f}^k$ is not integrable (Proposition 7.1 of \cite{Kleinbock_Margulis}). However when $d = 2$ we have $\hat{f}$ is bounded on $X_2$, and thus $\hat{f}^k$ integrable for any $k\geq 1$. So we now exclusively study the case $d= 2$. Rogers had a mistake in his paper claiming Equation~\ref{eq:RogersFormula} held for $d = 2$, which we can see does not work by setting $h_0$ to be the characteristic function of the set given by $$\{(v_1,v_2): \max(|v_1| , |v_2| ) \leq R \, , \, \det(v_1v_2) \notin \Z\,\}.$$
	Applying Equation~\ref{eq:RogersFormula} to $h_0$, the left hand side of Equation~\ref{eq:RogersFormula} will be identically zero as for any $v_1,v_2 \in \Z^2_{prim}$
	 $$\det(gv_1\, gv_2) = \det(g) \det(v_1 v_2) \in \Z,$$ and the right hand side will be nonzero as the vectors with integer determinant are  a Lebesgue measure zero subset of $\R^2 \times \R^2$. 
	
	In correction to Rogers, Schmidt addressed the case where $d= 2$ (see \cite{Schmidt60} Section 6). 
\begin{thm}\label{thm:Schmidt}
		Let $f\in B_c(\R^2\times \R^2).$  
		Then 
		\begin{equation}
		\int_{Y_3} \hat{f} \,d\mu(g)= \sum_{n\in \Z\setminus\{0\}} \frac{\varphi(n)}{\zeta(2)}\int_{SL(2,\R)} f\left(g J_n\right) \,d\eta + \frac{1}{\zeta(2)} \int_{\R^2} f(x, -x) + f(x,x) \,dx
		\label{eq:Full_Schmidt_Formula}
		\end{equation}
		where $\varphi$ is the standard Euler totient function. 
	\end{thm}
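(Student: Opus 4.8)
The plan is to expand the Siegel transform as a double sum over primitive vectors and integrate term by term after grouping the pairs into $SL(2,\Z)$-orbits. Writing $\hat{f}(g) = \sum_{v_1,v_2 \in \Z^2_{prim}} f(gv_1, gv_2)$, I would first split the outer sum according to the determinant $n = \det(v_1\,v_2) \in \Z$, which is an $SL(2,\Z)$-invariant since $\det(\gamma v_1\,\gamma v_2) = \det(v_1\,v_2)$. This determinant vanishes exactly when $v_1$ and $v_2$ are parallel and is a nonzero integer otherwise, and these two regimes will produce the diagonal term and the main sum in Equation~\ref{eq:Full_Schmidt_Formula}, respectively. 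Since $f$ is supported in some ball of radius $R$, for each fixed $g$ only finitely many pairs contribute, and because $|n| = |\det(gv_1\,gv_2)| \le \norm{gv_1}\,\norm{gv_2} \le R^2$ only the finitely many $n$ with $|n| \le R^2$ occur; hence every sum below is finite and the interchange of sum and integral needs no convergence argument.

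For the nonzero-determinant part the crucial step is to classify the $SL(2,\Z)$-orbits of pairs of primitive vectors of determinant $n$. Using that $SL(2,\Z)$ acts transitively on $\Z^2_{prim}$ I would move $v_1$ to $e_1 = (1,0)^T$; the residual stabilizer of $e_1$ is the integer unipotent group $\left\{\begin{bmatrix} 1 & m \\ 0 & 1\end{bmatrix} : m \in \Z\right\}$, which sends $v_2 = (a,n)^T$ to $(a+mn, n)^T$. As primitivity of $v_2$ forces $\gcd(a,n) = 1$, the orbits are indexed by $a \in (\Z/n\Z)^\times$, giving exactly $\varphi(|n|)$ of them, each with representative $(e_1, (a,n)^T)$ and trivial stabilizer (two linearly independent vectors are fixed only by the identity). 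Unfolding each orbit through the standard identity $\int_{Y_3} \sum_{\gamma \in SL(2,\Z)} \phi(g\gamma)\,d\mu = \frac{1}{\zeta(2)} \int_{SL(2,\R)} \phi(g)\,d\eta$ — where the constant $1/\zeta(2)$ is forced by $\eta(Y_3) = \zeta(2)$ together with $\mu$ being a probability measure — converts the orbit into $\frac{1}{\zeta(2)}\int_{SL(2,\R)} f(ge_1, g(a,n)^T)\,d\eta$.

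To collapse these $\varphi(|n|)$ orbits into the single matrix $J_n$ with multiplicity $\varphi(n)$, I would use the right-invariance of $\eta$: the matrix $h = \begin{bmatrix} 1 & (1-a)/n \\ 0 & 1\end{bmatrix} \in SL(2,\R)$ fixes $e_1$ and sends $(a,n)^T$ to $(1,n)^T$, so substituting $g \mapsto gh$ gives $\int_{SL(2,\R)} f(ge_1, g(a,n)^T)\,d\eta = \int_{SL(2,\R)} f(ge_1, g(1,n)^T)\,d\eta = \int_{SL(2,\R)} f(gJ_n)\,d\eta$, independent of $a$. Summing over the $\varphi(|n|)$ orbits and then over $n \in \Z \setminus \{0\}$ yields the first term of Equation~\ref{eq:Full_Schmidt_Formula}. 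For the parallel part, primitivity forces $v_2 = \pm v_1$, so its contribution is $\sum_{v \in \Z^2_{prim}} \left[f(gv,gv) + f(gv,-gv)\right]$; writing $f_+(x) = f(x,x)$ and $f_-(x) = f(x,-x)$, each summand is an ordinary one-variable primitive Siegel transform, and I would finish by invoking Siegel's mean value theorem — the $q=3$ case of Theorem~\ref{thm:Veech}, where $c(3) = \zeta(2)$ — to obtain $\frac{1}{\zeta(2)}\int_{\R^2}\left(f(x,x)+f(x,-x)\right)\,dx$.

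I expect the main obstacle to be the bookkeeping of Haar-measure normalizations, so that the unfolding constant in the non-parallel term and the constant in the diagonal term both emerge as exactly $1/\zeta(2)$. Concretely this amounts to pinning down the local factor relating $\eta$ to the fibration $g \mapsto ge_1$ of $SL(2,\R)$ over $\R^2 \setminus \{0\}$ with unipotent fibers; this is precisely the computation underlying Siegel's formula, so rather than redo it I would import it from Theorem~\ref{thm:Veech}, thereby keeping both constants consistent.
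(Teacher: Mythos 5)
Your proof is correct, but it takes a genuinely different route from the paper. The paper derives Theorem~\ref{thm:Schmidt} as the $q=3$ case of Theorem~\ref{thm:The_Big_Kahuna}, and its strategy is top-down: it first observes that $f\mapsto \int_{Y_q}\hat f\,d\mu$ is an $SL(2,\R)$-invariant positive functional, invokes the Riesz representation theorem to get an invariant measure $\nu$ on $\R^2\times\R^2$, classifies all $SL(2,\R)$-orbits and their invariant measures (Lemma~\ref{lem:Decomposition_Into_Orbits}), and then pins down the unknown coefficients $a$, $b_t$, $c_n$ by evaluating against test functions $\chi_{B(0,R)}\chi_{LD_t}$ and $\chi_{B(0,R)}\chi_{D_n}$ to show $a=0$, $b_t=0$ for $t\neq\pm1$, and $c_n=0$ off the admissible determinant set. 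You instead work bottom-up: you decompose the defining double sum over $\Z^2_{prim}\times\Z^2_{prim}$ directly by the value of $\det(v_1\,v_2)$, so the vanishing of the extraneous orbit types is automatic (parallel primitive pairs force $v_2=\pm v_1$, and compact support bounds $|n|\le R^2$), and no representation theorem or measure classification is needed. From that point on your computational core coincides with the paper's: your orbit count via transitivity on $\Z^2_{prim}$ and the unipotent stabilizer, yielding $\varphi(|n|)$ classes indexed by $a\in(\Z/n\Z)^\times$, is exactly the paper's Lemma~\ref{lem:Decompose_Into_SL_2_Z_Orbits}; your unfolding identity with constant $1/\zeta(2)$ and the right-translation by $\begin{bmatrix}1 & (1-a)/n\\ 0 & 1\end{bmatrix}$ collapsing every class to $J_n$ is the paper's Lemma~\ref{lem:Fibration_On_One_Orbit}; and your reduction of the diagonal term to Siegel's mean value theorem is Lemma~\ref{lem:Siegel_Linearly_Dependent_Case}. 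Your approach buys brevity and self-containedness for $q=3$, where the arithmetic of $\gcd$ is available; the paper's functional-analytic framing buys a template that transfers to all Hecke groups $H_q$, where ``primitive'' must be replaced by the condition $(a,b)_q=1$ of Proposition~\ref{prop:H_q_Characterization} and the admissible determinants form the nontrivial set $N_q$. One small gloss: your claim that the interchange ``needs no convergence argument'' is slightly overstated --- finiteness of the outer sum over $n$ is indeed automatic, but the unfolding step itself exchanges the infinite sum over $\gamma\in SL(2,\Z)$ with $\int_{Y_3}$, which should be justified by Tonelli applied to $|f|$ together with boundedness of the Siegel--Veech transform (Lemma 16.10 of Veech, as the paper notes); this is routine and does not affect correctness.
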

	
	Note this formula does not look exactly like the formula in Schmidt \cite{Schmidt60} as we have a different normalization of the Haar measure $\eta$. Note also that Theorem~\ref{thm:Schmidt} is a special case of Theorem~\ref{thm:The_Big_Kahuna}.

\subsection{Translation surfaces}\label{sec:Translation_Surfaces}
	A \textit{translation surface} is a surface formed by taking a finite number of polygons in the plane and gluing opposite sides by translation, where surfaces are equivalent up to cutting and pasting of these polygons via translation. Equivalently a translation surface is a closed Riemann surface $X$ with a nonzero holomorphic $1$-form $\omega$. This section will focus on examples relevant to this paper. For more background see \cite{masur2006ergodic}, \cite{hubert2006introduction}, \cite{eskin2006counting}.

		Given $A\in SL(2,\R)$ and $(X,\omega)$ a translation surface, we produce a new translation surface $A\cdot (X,\omega)$, which is the surface with charts of $(X,\omega)$ composed with $A$ acting linearly on $\R^2$. The \textit{Veech group} is the stabilizer subgroup of this action 
		$$SL(X,\omega) \stackrel{def}{=} \{A \in SL(2,\R): A\cdot (X,\omega) = (X,\omega)\}.$$	 The Veech group is always discrete and in fact trivial for almost every translation surface \cite{Gutkin_Judge_Geometry}.

		\textit{Saddle connections} on a translation surface $(X,\omega)$ are geodesics which start and end at zeros of the 1-form $\omega$ on $X$. For each saddle connection $\gamma$, there is an associated \textit{holonomy vector} $v_{\gamma} = \int_\gamma \omega \in \R^2$ which records the length and direction of $\gamma$.
		
		\subsection{Hecke triangle groups as Veech groups} \label{sec:Hecke_Triangle_As_Veech}
		We will consider surfaces whose Veech group is given by $SL(X,\omega) = H_q$ for $q\geq 3$. When $q= 3$, $H_3 = SL(2,\Z)$ which is the Veech group for the square torus. In general given a translation surface $(X,\omega)$ where we glue two regular $(2n+1)$-gons and then identify opposite sides, Veech showed in \cite{Veech89} that $SL(X,\omega) = H_{2n+1}$. For even Hecke triangle groups, Bouw and M\"oller \cite{BouwMoeller10} followed by a constructive proof of Hooper \cite{Hooper12} were able to show that there exists a translation surface $(X,\omega)$ with $SL(X,\omega)$ conjugate to an index $2$ subgroup of  $H_{2n}$, but there is no translation surface with Veech group containing $H_{2n}$. 
		
		Notice the set of holonomy vectors for the square torus are 
		$$\Z^2_{prim}= SL(2,\Z)\cdot \begin{bmatrix} 1 \\ 0 \end{bmatrix} = H_3\cdot \begin{bmatrix} 1 \\ 0 \end{bmatrix}.$$ 
		The characterization is not as clean for other surfaces, but if $(X,\omega)$ is a translation surface with $SL(X,\omega)$ a lattice, then the set of holonomy vectors will always be given as a finite union of $SL(X,\omega)$-orbits \cite{Veech89}, 5th paragraph section 3. By studying the Siegel--Veech transform over $V_q$ we will be able to understand asymptotic density of saddle connections for a class of translation surfaces \cite{Veech98}.

		\subsection{Geometric Euler totient function} \label{sec:euler}
		Recall we define the \textit{$q$-geometric Euler totient function} by
		$$\varphi_q(b) = \#\left\{1\leq a\leq \lambda_q \abs{b}: \begin{bmatrix} a\\ b \end{bmatrix} \in V_q\right\},$$
		where $\varphi_3 = \varphi$ is the standard Euler totient function.
		Since $V_q$ is discrete and thus $\varphi_q$ is finite and well defined.
		Though $\varphi_q$ generalizes the standard Euler totient function, $\varphi_q$ does \textit{not} agree with the more standard Euler totient function defined for the ring of integers over a number field in terms of the product formula over prime ideals. 
		
		 Following \cite{Lang_Lang}, we can define a \textit{greatest common $q$-divisor} denoted $(a,b)_q$ for $a,b \in \Z[\lambda_q]$ using a Euclidean pseudo-algorithm. This greatest common $q$-divisor has many similar properties to the gcd function, including for any $t\neq 0$, 
		\begin{equation}
			\label{eq:gcdmult}
			(ta, tb)_q = t \cdot (a,b)_q.
		\end{equation}
		With this definition we also have the following useful characterization of elements of $H_q$ as proved in Proposition~3.7 of \cite{Lang_Lang}.
		\begin{prop} \label{prop:H_q_Characterization}
	A matrix $\begin{bmatrix}
		a &  c \\ b & d
	\end{bmatrix} \in SL(2, \Z[\lambda_q])$ is in $H_q$ if and only if $(a,b)_q = (c,d)_q = 1$. In fact if $(a,b)_q < 1$ or $|b| < 1$, then $(a,b)^T$ cannot be a column of a matrix in $H_q$.
\end{prop}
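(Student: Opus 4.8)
The plan is to prove both implications by tracking how the generators $S$ and $T$ act on the columns of a matrix, and to connect this action to the Euclidean pseudo-algorithm that defines $(\cdot,\cdot)_q$. The whole argument rests on one invariance lemma, which I would establish first: for all $a,b \in \Z[\lambda_q]$ one has $(a+\lambda_q b,\, b)_q = (a,b)_q$ and $(-b,\,a)_q = (a,b)_q$. The first identity records the effect on the top column of left multiplication by $T$, and the second that of $S$; both should fall out of the construction of the pseudo-algorithm, since a single reduction step is precisely a $T^{\pm n}$ translation followed by the swap $S$, and the scaling property~\eqref{eq:gcdmult} handles units. Iterating, $(a,b)_q$ is invariant under left multiplication by any word in $S^{\pm1},T^{\pm1}$, and the same holds for the second column $(c,d)^T$.

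Given this lemma, the forward direction is immediate. If $M \in H_q$, then $M$ is a word $w$ in the generators, so its columns are $w\,(1,0)^T = (a,b)^T$ and $w\,(0,1)^T = (c,d)^T$. Invariance together with the base values $(1,0)_q = (0,1)_q = 1$ then forces $(a,b)_q = (c,d)_q = 1$.

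For the converse, suppose $M \in SL(2,\Z[\lambda_q])$ satisfies $(a,b)_q = (c,d)_q = 1$. I would run the pseudo-algorithm (the $\lambda$-continued fraction, or Rosen continued fraction, algorithm attached to $H_q$) on the first column $(a,b)^T$: each step is left multiplication by a generator word of the form $T^{\pm n}S$, and since $(a,b)_q = 1$ the algorithm terminates with the column reduced to $\pm(1,0)^T$. Absorbing the sign into $S^2 = -I$, this produces $w \in H_q$ with $w\,(a,b)^T = (1,0)^T$. Then $wM$ lies in $SL(2,\Z[\lambda_q])$, is upper triangular, and has top-left entry $1$; hence $wM = \pm T^k$ for some $k \in \Z$, so $wM \in H_q$ and therefore $M = w^{-1}(wM) \in H_q$.

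The main obstacle is the converse, and specifically the termination and correctness of the pseudo-algorithm: for $q \neq 3$ the ring $\Z[\lambda_q]$ is not Euclidean, so one cannot simply appeal to ordinary division with remainder, and must instead invoke the genuine convergence of the Rosen continued fraction expansion to guarantee that the reduction reaches $\pm(1,0)^T$ rather than cycling or stalling. The final clause of the statement is then a byproduct of the same normalization: the pseudo-algorithm is set up so that a column actually arising in an element of $H_q$ has pseudo-gcd of size at least $1$ and nonzero bottom entry of size at least $1$; a column with $(a,b)_q < 1$ or $\abs{b} < 1$ violates these constraints, cannot be reduced to $(1,0)^T$, and hence cannot be a column of any matrix in $H_q$.
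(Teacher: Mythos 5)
The first thing to note is that the paper does not prove this proposition at all: it is quoted verbatim from Proposition~3.7 of \cite{Lang_Lang}, so there is no internal proof to compare against, and your attempt has to be judged on its own merits. On those terms, your forward direction is sound: invariance of $(\cdot,\cdot)_q$ under left multiplication by $S^{\pm 1}$ and $T^{\pm 1}$ is indeed built into the pseudo-algorithm (each reduction step is a $T^{\mp m}$ followed by $S$), and combined with the base values $(1,0)_q = (0,1)_q = 1$ it yields $(a,b)_q = (c,d)_q = 1$ for the columns $M(1,0)^T$ and $M(0,1)^T$ of $M \in H_q$. The reduction of the first column to $\pm(1,0)^T$ when $(a,b)_q = 1$ is also legitimate, since each step of the algorithm is implemented by an element of $H_q$ and its terminal value is, by definition, $(a,b)_q$.

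The genuine gap is the last step of your converse: from ``$wM \in SL(2,\Z[\lambda_q])$ is upper triangular with top-left entry $1$'' you conclude $wM = \pm T^k$. That inference is valid only for $q = 3$, where $\lambda_3 = 1$. For $q \geq 4$ the matrix $\begin{bmatrix} 1 & \ell \\ 0 & 1 \end{bmatrix}$ lies in $SL(2,\Z[\lambda_q])$ for \emph{every} $\ell \in \Z[\lambda_q]$, but lies in $H_q$ only when $\ell \in \lambda_q\Z$, because the stabilizer of the cusp in the discrete group $H_q$ is $\pm T^{\Z}$; for instance $\ell = 1$, $q = 4$ gives a unipotent matrix not in $H_4$. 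Symptomatically, your converse never invokes the hypothesis $(c,d)_q = 1$: as written it would prove the false claim that $(a,b)_q = 1$ alone implies membership in $H_q$ (the matrix just displayed has first column $(1,0)^T$). The repair is to apply your invariance lemma to the second column: $(c,d)_q = 1$ gives $(\ell,1)_q = 1$, and you then need the lemma that $(\ell,1)_q = 1$ forces $\ell \in \lambda_q\Z$ --- equivalently, that for $\ell \notin \lambda_q\Z$ the pseudo-algorithm applied to $(\ell,1)$ terminates at a value of absolute value strictly less than $1$. That lemma is the actual content of Lang--Lang's Proposition~3.7, and it is also exactly what the final clause of the statement (the cases $(a,b)_q < 1$ and $0 < \abs{b} < 1$) encodes; it cannot be dismissed as ``a byproduct of the normalization,'' since it is precisely where the arithmetic of $\Z[\lambda_q]$ and the discreteness of $H_q$ enter, and your sketch currently assumes it rather than proves it.
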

	
\section{Orbits and integrals} \label{sec:Orbits and integrals}
	The goal of this section is to prove Theorem \ref{thm:The_Big_Kahuna}. 
	
	Let $f \in B_c(\R^2\times \R^2)$, and define $\hat{f}$ as in Definition \ref{def:S--VTransform}. Consider the map
	$$f\mapsto \int_{Y_q} \hat{f} d\mu.$$
	This mapping is a positive linear functional which is $SL(2,\R)$- invariant, where $SL(2,\R)$ acts diagonally by $g \cdot (v_1, v_2) = (gv_1,gv_2)$ for $(v_1,v_2) \in \R^2 \times \R^2$. Hence by the Riesz representation theorem, there exists a measure $\nu$ so that 
	$$\int_{\R^2\times \R^2} f\,d\nu = \int_{Y_q} \hat{f} \,d\mu \text{ for any }f \in B_c(\R^2\times \R^2).$$
	Since $\nu$ is $SL(2,\R)$-invariant, we can write $\nu$ as a combination of measures on $SL(2,\R)$ orbits of $\R^2\times \R^2$. So to understand $\nu$ we need to understand our integral over $SL(2,\R)$ orbits. 
	
	The outline of the proof is as follows. In section~\ref{sec:decomp_orbits} we split $\R^2\times \R^2$ into $SL(2,\R)$ orbits under the diagonal action and find the possible $SL(2,\R)$-invariant measures on these subsets. In section~\ref{sec:pf_reduction} we will reduce the uncountable number of orbits which occur in our setting to two linearly dependent orbits, and a countable number of linearly independent orbits. After setting up notation in section~\ref{sec:Notation}, in section~\ref{sec:pf_dependent} we reduce the linearly dependent case to Theorem~\ref{thm:Veech}, finally addressing the linearly independent case in section~\ref{sec:pf_independent}.
	
\subsection{Decomposition into orbits} \label{sec:decomp_orbits}
Let $\R^n_0 = \R^n\setminus\{\vec{0}\}$, similarly $\Z^n_0 = \Z^n \setminus \{{0}\}$, and $\Z_0[\lambda_q] = \Z[\lambda_q] \setminus \{0\}$.
	\begin{lemma}\label{lem:Decomposition_Into_Orbits}
		The following decomposes $\R^2\times \R^2$ into disjoint $SL(2,\R)$ orbits:
		$$\R^2 \times \R^2 =\left(\bigsqcup_{n\in \R_0} D_n \right) \sqcup \left(\bigsqcup_{t\in \R_0} LD_t\right) \sqcup H \sqcup V  \sqcup\{{ 0}\},$$	
		where we have the linearly independent determinants, 
		$$D_n = \{(\vec{v},\vec{w})\in \R^2 \times \R^2: \det(\vec{v}\; \vec{w}) = n\},$$
		the linearly dependent subsets
		$$LD_t = \{(\vec{v},t\vec{v}): \vec{v}\in \R^2_0\},$$
		and two special cases of linearly dependent vectors: horizontal and vertical 
		$$H = \{(\vec{v},0): \vec{v}\in \R^2_0\} \quad V = \{(0,\vec{v}): \vec{v}\in \R^2_0\}.$$
	\end{lemma}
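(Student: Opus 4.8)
The plan is to verify that the five listed pieces are mutually disjoint and that their union is all of $\R^2\times\R^2$, and then to confirm that each piece is a single $SL(2,\R)$ orbit (or a union of orbits, as appropriate) under the diagonal action. First I would handle the exhaustiveness and disjointness, which is the bookkeeping part. Given any pair $(\vec{v},\vec{w})\in\R^2\times\R^2$, I would split on whether the pair is linearly independent. If $\det(\vec{v}\,\vec{w})=n\neq 0$, the pair lands in exactly one $D_n$, and the determinant being a conjugation-type invariant pins down $n$ uniquely. If the pair is linearly dependent, I would branch: if both vectors are zero we are in $\{0\}$; if exactly $\vec{w}=0$ but $\vec{v}\neq 0$ we are in $H$; if $\vec{v}=0$ but $\vec{w}\neq 0$ we are in $V$; and otherwise $\vec{v}\neq 0$ forces $\vec{w}=t\vec{v}$ for a unique scalar $t\in\R_0$, placing the pair in $LD_t$. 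Since these branches are defined by mutually exclusive conditions (independent versus dependent, and within dependent, which coordinates vanish), disjointness is immediate, and tracking the unique values of $n$ or $t$ shows the indexed families are themselves internally disjoint.

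Next I would show each piece is $SL(2,\R)$-invariant and transitive, i.e. an orbit. Invariance is clear: for $g\in SL(2,\R)$, $\det(g\vec{v}\;g\vec{w})=\det(g)\det(\vec{v}\,\vec{w})=\det(\vec{v}\,\vec{w})$ since $\det g=1$, so $D_n$ maps to $D_n$; and the diagonal action preserves linear dependence and the scalar $t$, as well as the vanishing of either coordinate, so $LD_t$, $H$, $V$, and $\{0\}$ are each invariant. For transitivity on $D_n$, given two pairs with the same nonzero determinant, the two ordered bases each form an invertible $2\times 2$ matrix of determinant $n$; the change-of-basis matrix taking one ordered pair to the other has determinant $1$ and hence lies in $SL(2,\R)$, so the action is transitive on $D_n$. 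For $H$ and $V$, transitivity on the nonzero vector follows from the standard transitivity of $SL(2,\R)$ on $\R^2_0$. For $LD_t$, a pair $(\vec{v},t\vec{v})$ with $\vec{v}\neq 0$ is sent to $(g\vec{v},t g\vec{v})$, so again transitivity reduces to transitivity on the single nonzero vector $\vec{v}$.

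I expect the main subtlety to be transitivity on $D_n$, specifically confirming that the change-of-basis matrix genuinely has determinant exactly $1$: if $(\vec{v},\vec{w})$ and $(\vec{v}',\vec{w}')$ both have determinant $n$, writing $M=(\vec{v}\;\vec{w})$ and $M'=(\vec{v}'\;\vec{w}')$ as matrices with $\det M=\det M'=n\neq 0$, the matrix $g=M'M^{-1}$ satisfies $\det g = n/n = 1$ and $gM=M'$, which is exactly the statement that $g$ carries one pair to the other. The only thing to be careful about is that $M$ is invertible precisely because $n\neq 0$, which is why the linearly dependent cases must be treated separately rather than folded into the $D_n$ family. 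The remaining verifications are routine, so I would state them briefly and devote the written argument mainly to the $D_n$ transitivity and to cleanly enumerating the dependent cases.
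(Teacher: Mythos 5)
Your proposal is correct and follows essentially the same route as the paper: partition pairs by linear dependence versus the value of the determinant, and identify each piece as a single orbit of the diagonal action, using transitivity of $SL(2,\R)$ on $\R^2_0$ for the dependent cases. The only cosmetic difference is that the paper exhibits an explicit base point for each orbit (e.g.\ $\left(\begin{bmatrix}1\\0\end{bmatrix},\begin{bmatrix}0\\n\end{bmatrix}\right)$ for $D_n$), whereas you prove transitivity on $D_n$ directly via the change-of-basis matrix $g=M'M^{-1}$ with $\det g = n/n = 1$ --- the same underlying computation.
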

	\begin{proof}
		We will realize each subset as an orbit of $SL(2,\R)$ under the diagonal action on $\R^2 \times \R^2$. 		
		Since $g\cdot \{{0}\} = 0$ for all $g\in SL(2,\R)$, the point $\{{0}\}$ is an entire orbit.
		
		Now notice that $g\cdot \begin{bmatrix} 1 \\ 0 \end{bmatrix} = \R^2_0$. Using this fact, for any $t\in \R_0$,
		$$SL(2,\R) \cdot \left(\begin{bmatrix}
			1 \\0
		\end{bmatrix}, \begin{bmatrix} t \\ 0 \end{bmatrix}\right) = \{(\vec{v}, t\vec{v}): \vec{v}\in \R^2_0\} = LD_t.$$
		
		Similarly for $H$ and $V$, it suffices to see that they are both given by 
		$$H = SL(2,\R) \cdot \left( \begin{bmatrix} 1 \\ 0\end{bmatrix}, \begin{bmatrix} 0 \\ 0\end{bmatrix}\right) \quad V = SL(2,\R) \cdot \left(  \begin{bmatrix} 0 \\ 0\end{bmatrix}, \begin{bmatrix} 1 \\ 0\end{bmatrix}\right).$$
		
		Finally, for $n\neq 0$, since $n\R^2_0 = \R^2_0$, 
		$$SL(2,\R) \cdot \left(\begin{bmatrix} 1 \\ 0 \end{bmatrix}, \begin{bmatrix}
			0\\ n
		\end{bmatrix}\right) = \{(\vec{v}, n\vec{u}): \det(\vec{v}\,\vec{u}) = 1\} = D_n.$$
		
		Thus we have shown each of these subsets is an $SL(2,\R)$ orbit. Finally, since every pair of elements in $\R^2$ is either linearly independent and thus have a nonzero determinant or linearly dependent and thus are scalar multiples we conclude every element $(v_1,v_2) \in \R^2 \times \R^2$ is contained in one of the given sets. Thus we have a decomposition of $\R^2\times \R^2$ into $SL(2,\R)$ orbits.
	\end{proof}

	The last task of this subsection is to determine the possible measures on each of our subsets. We will freely use the fact that Haar measure is unique up to scaling. In this section we will fix a particular scaling of Haar measure for each measure, and then by taking a linear combination of these different measures we can obtain $\nu$.
	
	On $\{0\}$, there is only one probability measure given by $\delta_0$, which is trivially $SL(2,\R)$ invariant.
	
On $H, V$, and $LD_t$ for $t\in \R_0$, we have a copy of $\R^2_0$. Notice Lebesgue measure $m_2$ on $\R^2$ is $SL(2,\R)$-invariant. So we will fix the standard Lebesgue measure giving the unit square $[0,1]^2$ volume $1$ on each of the subsets $H$, $V$, and $LD_t$ for $t\in \R_0$. Since $\{(0,0)\}$ is a measure zero subset, without loss of generality we can write integrals with respect to $m_2$ over all of $\R^2$. To see this measure is the unique $SL(2,\R)$-invariant measure (up to scaling), consider the induced Haar measure under the quotient of $SL(2,\R)/N \cong \R^2_0$ where $N = \left\{\begin{bmatrix}
	1 & t \\ 0 & 1
\end{bmatrix} : t\in \R\right\}$. 	
	
	To find a Haar measure on $D_n$, we will first find a Haar measure on $SL(2,\R)$, then we will show how this can be viewed as a Haar measure on $D_n$. To construct a Haar measure on $SL(2,\R)$, consider $SL(2,\R)$ as a subset of $(\R^4, m_4)$ where $m_k$ is Lebesgue measure on $\R^k$. As a result, for measurable $A\subseteq SL(2,\R)$, we can define the cone measure
	$$\eta(A) = m_4(C(A)) \text{ where } C(A) = \{\alpha g: \alpha \in (0,1], g\in A\}.$$
	Under matrix multiplication, $m_4$ is $SL(2,\R)$ invariant. Hence $\eta$ is an $SL(2,\R)$ invariant measure on $SL(2,\R)$. Under this measure, the set of matrices with a zero in the top left corner is a null set. Thus we can write the measure $d\eta = da\,db\,ds$ under the coordinates
	$$\begin{bmatrix}
	1 & 0 \\ s &1
	\end{bmatrix} \cdot \begin{bmatrix}
		a & b \\ 0 & a^{-1}
	\end{bmatrix}.$$
	With this normalization, in the quotient by $SL(2,\Z)$, we can compute the pushforward defined in terms of the projection map $\pi$ and fundamental domain $F$  \cite{Athreya_Cheung} $$(\eta)_*(Y_3) = \eta(\pi^{-1}(Y_3) \cap F) = \frac{\pi^2}{6} = \zeta(2).$$ 
	With this fixed normalization, $\eta$ gives the Poincar\' e volume. This means that we in fact have 
	$$(\eta)_*(Y_q) = c(q).$$
	
	Now having fixed Haar measure on $D_1$, for $D_n$ with $n\neq 0$, we identify $D_n$ with $D_1 = SL(2,\R)$ as $D_n = D_1 J_n$. Since we can write $D_n = D_1 J_n$, we choose the coordinates on $D_n$ to be the same as those on $D_1$. In this manner, we have $\eta$ is the Haar measure we will choose as our normalization of Haar measure on $D_n$.

	We've now decomposed $\R^2\times \R^2$ into $SL(2,\R)$ orbits, and fixed a normalization of Haar measure on each of these orbits. 
	
Since Haar measure is unique up to scaling, we can now write our $SL(2,\R)$ invariant measure on $\R^4$ as 
$$\nu = a \delta_0 + \sum_{t\in \R\cup \{\infty\}} b_t \, m_2  + \sum_{n\in \R_0} c_n \eta $$
for some constants $a,b_t, c_n$. where $b_\infty$ corresponds to $V$ and $b_0$ corresponds to $H$. 
\subsection{Reduction to visible determinants and removal of zero term}\label{sec:pf_reduction}
We have shown
\begin{align}\label{eq:Reduction_to_Integers}
\int_{Y_q} \hat{f} \, d\mu &= \int_{\R^2  \times \R^2} f \, d\nu\\
&= af(0,0) + \int_{t\in \R \cup \{\infty\}} b_t \int_{\R^2} f(x,tx) \,dx + \int_{n\in \R_0} c_n \int_{SL(2,\R)} f(g J_n) \,d\eta, \tag{where we define $(x,\infty x) = (0,x)$.}
\end{align}

The purpose of this section is to prove the following.
\begin{lemma}
	In Equation~(\ref{eq:Reduction_to_Integers}), $a = 0$, $t\in \{\pm 1\}$, and $n \in N_q$. 
\end{lemma}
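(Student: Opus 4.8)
The plan is to exploit two features of the functional $f\mapsto\int_{Y_q}\hat f\,d\mu$: it is \emph{positive} (since $f\ge 0\Rightarrow\hat f\ge 0$), and $\hat f(g)=\sum_{v_1,v_2\in V_q}f(gv_1,gv_2)$ only ever evaluates $f$ on the set $SL(2,\R)\cdot(V_q\times V_q)$. Consequently the coefficient attached to any single orbit in the decomposition of $\nu$ can be nonzero only if that orbit actually meets $V_q\times V_q$. The mechanism for isolating each coefficient is that the orbits are separated by two continuous $SL(2,\R)$-invariants, the determinant $\det(v\;w)$ and, on the locus $\{\det=0,\,v\neq 0\}$, the slope $t$ with $w=tv$; crucially, working in $B_c$ lets us use \emph{sharp indicator cutoffs} along these invariants, which dissolves the apparent difficulty that the $D_n$ accumulate onto the lower-dimensional dependent orbits.

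First I would dispatch $a=0$. Since $V_q=H_q\cdot(1,0)^T$ contains no zero vector, $f=\chi_{\{0\}}$ gives $\hat f\equiv 0$, while $\{0\}$ has zero $m_2$-measure and lies on no $D_n$; hence $\int f\,d\nu=a=0$. Next, for the independent orbits I would show $c_n=0$ whenever $n\notin N_q$: because $\det(gv_1\;gv_2)=\det(v_1\;v_2)$ for $g\in SL(2,\R)$, the indicator $\chi_{D_n}$ is invariant, so for $f=\chi_{D_n}\cdot\rho$ with $\rho\ge 0$ every orbit of determinant $\neq n$ is annihilated and $\int f\,d\nu=c_n\int_{SL(2,\R)}\rho(gJ_n)\,d\eta$. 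On the other hand $\hat f\not\equiv 0$ forces a pair $v_1,v_2\in V_q$ with $\det(v_1\;v_2)=n$, i.e.\ $n\in N_q$. Choosing $\rho\ge0$ with $\int\rho(gJ_n)\,d\eta>0$ then yields $c_n=0$ off $N_q$, while $D_n$ does meet $V_q\times V_q$ for $n\in N_q$ by definition.

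The dependent locus $\{\det=0\}$ is handled identically, now cutting by slope. The orbits $H$ and $V$ require a pair with a zero coordinate, impossible as $0\notin V_q$, so $b_0=b_\infty=0$. For finite $t\neq 0$, a test function supported on $\{\det=0\}$ with slope in a small interval $I\ni t$ excluding $0,\infty,\pm1$ annihilates every $D_n$, reducing $\int f\,d\nu$ to a sum of terms $b_t\int_{\R^2}f(x,tx)\,dx$ over $t\in I$; if no $v\in V_q$ has $tv\in V_q$ for $t\in I$, then $\hat f\equiv 0$, and positivity of $\nu$ forces $b_t=0$ throughout $I$. Everything thus hinges on the arithmetic claim that $v\in V_q$ and $tv\in V_q$ force $t=\pm1$.

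This arithmetic step is the main obstacle. I would prove it from Proposition~\ref{prop:H_q_Characterization} together with the multiplicativity of the greatest common $q$-divisor. Writing $v=(a,b)^T$, membership in $V_q$ forces $(a,b)_q=1$; if also $(ta,tb)^T\in V_q$ then $(ta,tb)_q=1$, while Equation~\ref{eq:gcdmult}, with the non-negativity of the $q$-gcd, gives $(ta,tb)_q=|t|\,(a,b)_q=|t|$, so $|t|=1$. The two signs are accounted for separately: $t=1$ is the diagonal, and $t=-1$ occurs because $-I=S^2\in H_q$, so $-v\in V_q$ whenever $v\in V_q$. The delicate cases are the degenerate vectors such as $(1,0)^T$ with $b=0$, and, for those $q$ where $\Z[\lambda_q]$ has units of absolute value $\neq1$, ruling out scalars $t$ that are such units; both are controlled by the absolute-value normalization of $(\cdot,\cdot)_q$ and the final clause of Proposition~\ref{prop:H_q_Characterization}. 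Finally, the reduction of the surviving $t=\pm1$ contributions to the boundary terms with constant $1/c(q)$ will follow by unfolding against Theorem~\ref{thm:Veech}.
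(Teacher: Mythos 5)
Your proposal is correct and follows essentially the same route as the paper: both isolate each coefficient by testing against indicator functions cut along the $SL(2,\R)$-invariants (the determinant, and the slope on the degenerate locus), and both reduce the key step to the arithmetic fact, via Proposition~\ref{prop:H_q_Characterization} and Equation~\ref{eq:gcdmult}, that $v\in V_q$ and $tv\in V_q$ force $t=\pm 1$, with the zero orbit killed because $0\notin V_q$. Your interval cutoff in the slope parameter and the separate treatment of $H$ and $V$ are minor refinements of the paper's single-orbit test functions $f_{R,t}$ and $f_{R,n}$, not a genuinely different method.
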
 

\begin{proof} To see that $t\in \{\pm 1\}$, consider the function $f$ supported on $LD_t$ for \newline $t\in \R \cup\{\infty\}$ where $LD_0 = H$ and $LD_\infty = V$. That is, for some large $R$ and $B(0,R)$ denoting the Euclidean ball in $\R^4$, let 
$$f_{R,t}(x,y) = \chi_{B(0,R)\setminus\{0\}}(x,y) \chi_{LD_t}(x,y) \quad \text{for } x,y \in \R^2.$$
On the left hand side of Equation \ref{eq:Reduction_to_Integers}, notice 
\begin{align*}
\hat{f}_{R,t}([g]) &= \sum_{v_1, v_2 \in V_q} f_{R,t}(gv_1,gv_2)\\
& = \#\{v_1,v_2 \in V_q \cap B(0,R): gv_1 = tgv_2\} \\
& = \#\{v_1,v_2 \in V_q \cap B(0,R): v_1 = tv_2\} .
\end{align*}
If $\begin{bmatrix}
	a	\\ c
\end{bmatrix} \in V_q$ by Proposition \ref{prop:H_q_Characterization}, we have $(a,c)_q = 1$, and thus by Equation (\ref{eq:gcdmult}), $(ta,tc)_q =t$. So by Proposition \ref{prop:H_q_Characterization} $\begin{bmatrix} ta \\ tc\end{bmatrix}$ cannot be an element of $V_q$ unless $t = \pm 1$. Or more geometrically since $V_q$ are the set of vectors visible from the origin, $tv$ is never visible from the origin unless $t = \pm 1$. Hence we've shown
$$\hat{f}_{R,t} =  \begin{cases} 0 & t\neq \pm 1\\\#\{v\in V_q \cap B(0,R)\} & t = \pm 1\end{cases}.$$
On the right hand side of Equation (\ref{eq:Reduction_to_Integers}), the only nonzero term will be the coefficient of $b_t$ for if $x\in B\left(0,\frac{R}{t}\right)$, then $tx \in B(0,R)$. Thus $\int_{\R^2} f_{R,t}(x,tx) \,dx \geq m_2\left(B_{\R^2}(0,\frac{R}{t})\right) >0$.  But when $t\neq \pm 1$, the left hand side of Equation~\ref{eq:Reduction_to_Integers} is zero since $\hat{f}_R = 0$. Hence $b_t = 0$ for $t \neq \pm 1$.

We now want to show that the set of possible determinants is $N_q$. For the determinant $n$ loci ($n\neq 0$), we similarly define 
$$f_{R,n}(x,y) = \chi_{B(0,R)}(x,y) \chi_{D_n}(x,y) \quad \text{for } x,y \in \R^2.$$
We compute
\begin{align*}
	\hat{f}_{R,n}([g]) &= \sum_{v_1,v_1 \in V_q} f_{R,n}(gv_1,gv_1)\\
	&= \#\{v_1,v_2 \in V_q \cap B(0,R): \det(v_1\,v_2)= \det(gv_1,gv_2) = n\}.\\
\end{align*}
	Since $N_q$ is the set of determinants that can arise as the determinant of two elements in $V_q$, we can write
	$$\hat{f}_{R,n} = \begin{cases} \#\{v_1,v_2 \in V_q \cap B(0,R): \det(v_1v_2) = n\} & n\in N_q\\ 0 & n\notin N_q. \end{cases}$$
	On the right hand side of Equation (\ref{eq:Reduction_to_Integers}), the only nonzero term corresponds to $c_n$, and $$\int_{SL(2,\R)} f_R\left(gJ_n	\right) \,d\eta >0$$ since  $D_n \cap B(0,R)$ has positive cone measure. In order to match the left hand side of Equation (\ref{eq:Reduction_to_Integers}) for $\hat{f}_{R,n}$, we conclude $c_n = 0$ for all $n\notin N_q$. 


We conclude this proof by showing $a = 0$. To see this, consider the characteristic function over the set $\{(0,0)\} \subseteq \R^2 \times \R^2$. That is set $f_0(x,y) = \chi_{\{(0,0)\}}(x,y)$. Then on the right hand side of Equation (\ref{eq:Reduction_to_Integers}), we have $f_0(0,0) = 1$, all other integrals are zero since $\{(0,0)\}$ is a measure zero subset of $\R^2$, and cannot show up in $SL(2,\R) J_n$ for any $n$. Thus the right hand side of Equation (\ref{eq:Reduction_to_Integers}) for $f_0$ is $a$. On the left hand side of Equation (\ref{eq:Reduction_to_Integers}), $(0,0)$ is not a pair of visible vectors since $(0,0)$ cannot be the first column of a matrix in $H_q$, so the left hand side is zero. Thus we conclude $a= 0$.
\end{proof}	
	
	To summarize, in this section we reduced our Equation (\ref{eq:Reduction_to_Integers}) to
	\begin{cor}\label{cor:Reduced_to_nonzero_coeffiecients}
		$$\int_{Y_q} \hat{f} \,d\mu = \int_{\R^2} b_1 f(x,x) + b_{-1} f(x,-x) \,dx + \sum_{n\in N_q} c_n \int_{SL(2,\R)} f\left(gJ_n\right)\,d\eta.$$
	\end{cor}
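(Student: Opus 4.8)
The plan is to obtain the corollary by substituting the three conclusions of the preceding Lemma directly into the expansion in Equation~(\ref{eq:Reduction_to_Integers}). Recall that equation expresses $\int_{Y_q}\hat f\,d\mu$ as the sum of an atomic term $af(0,0)$, a contribution $\int_{t\in\R\cup\{\infty\}} b_t\int_{\R^2}f(x,tx)\,dx$ from the linearly dependent orbits, and a contribution $\int_{n\in\R_0}c_n\int_{SL(2,\R)}f(gJ_n)\,d\eta$ from the linearly independent orbits. I would simply insert $a=0$, $b_t=0$ for $t\neq\pm1$, and $c_n=0$ for $n\notin N_q$, in that order.

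First, since $a=0$ the origin contributes nothing and the term $af(0,0)$ drops out. Next, the Lemma shows that the only linearly dependent orbits carrying mass are $LD_{1}$ and $LD_{-1}$, so the parameter integral over $t$ retains only the two terms at $t=1$ and $t=-1$; writing out the fiberwise Lebesgue integrals these become $\int_{\R^2}\big(b_1 f(x,x)+b_{-1}f(x,-x)\big)\,dx$. Finally, the Lemma shows $c_n=0$ whenever $n\notin N_q$, so the integral over $n$ is supported on $N_q$ and reduces to $\sum_{n\in N_q}c_n\int_{SL(2,\R)}f(gJ_n)\,d\eta$. Collecting these three simplifications is exactly the asserted identity.

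The one point I would take care to justify, and which I expect to be the only genuine content beyond bookkeeping, is that knowing the weights vanish off the discrete sets $\{\pm1\}$ and $N_q$ suffices to collapse the parameter integrals into genuine (countable) sums, rather than merely killing isolated values of a possibly continuous density. The clean way to see this is to note that $\hat f$ is by definition a sum over the discrete set $V_q\times V_q$, so for $\mu$-almost every $g$ the measure pushed forward by $v\mapsto gv$ lives on the countable orbit family actually realized by pairs from $V_q$; hence the weight measures governing $t$ and $n$ in $\nu$ are purely atomic, supported respectively in $\{\pm 1\}$ and in $N_q=\{\det(v_1\,v_2):v_1,v_2\in V_q\}$. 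With that observation the integral over $t$ and the integral over $n$ in Equation~(\ref{eq:Reduction_to_Integers}) are legitimately sums, and the substitution above yields the stated formula with no continuous remainder. Everything else is direct substitution of the Lemma's output.
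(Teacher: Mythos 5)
Your proposal is correct and takes essentially the same route as the paper, whose own ``proof'' of the corollary is exactly this substitution of $a=0$, $b_t=0$ for $t\neq\pm1$, and $c_n=0$ for $n\notin N_q$ into Equation~(\ref{eq:Reduction_to_Integers}). Your closing observation---that the parameter integrals over $t$ and $n$ genuinely collapse to sums because $\nu$ is supported on the countably many orbits actually realized by pairs from $V_q\times V_q$---is the same point the paper handles implicitly in Section~\ref{sec:Notation} when it rewrites the left-hand side as $T_{LD_{1}^{V}}+T_{LD_{-1}^{V}}+\sum_{n\in N_q}T_{D_n^{V}}$, so if anything your bookkeeping is slightly more careful than the paper's.
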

	
	\subsection{Notation and division into smaller lemmas}\label{sec:Notation}
	In the proceeding sections, we will compute the values for $b_1, b_{-1}$, and $c_n$ for $n\in N_q$. In order to do this, we introduce the following notation:
	for $D$ a discrete subset of $(\R^2)^k$ which is $V_q$-invariant under the diagonal action, define $f_{D}: Y_q \to \R$ by 
		$$f_D([g]) = \sum_{v\in D^k} f(gv)$$
	 In a similar manner define the functional $T_D: B_c((\R^{2})^k) \to \R$ by 
	 $$T_D(f) = \int_{Y_q} f_D([g]) \, d\mu([g]).$$ 
	 We now define the following sets:
	$$D_n^{V} = D_n \cap (V_q \times V_q) = \{(v,w)\in V_q \times V_q : \det(v\, w) = n\},$$
	$$LD_{\pm 1}^{V} = \{(v, \pm v) : v\in V_q\}.$$
	 
	Then we can rewrite the left hand side of Corollary~\ref{cor:Reduced_to_nonzero_coeffiecients} as
	\begin{align*}&\int_{Y_q}\left[ f_{LD_{1}^{V}} + f_{LD_{-1}^{V}} + \sum_{n\in N_q} f_{D_n^{V}}\right] \,d\mu \\
	&= T_{LD_{1}^{V}}(f) + T_{LD_{-1}^{V}}(f) + \sum_{n\in N_q} T_{D_n^{V}}(f).
	\end{align*}
Thus finding the coefficients in Corollary~\ref{cor:Reduced_to_nonzero_coeffiecients} is reduced to finding coefficients individually in each of these equations:
	\begin{equation}\label{eq:LD_case_coefficients}
		T_{LD_{\pm 1}}(f) = b_{\pm 1} \int_{\R^2} f(x,\pm x)\,dx,
	\end{equation}
	and for each $n\in N_q$
	\begin{equation}\label{eq:D_n_case_coefficients}
		T_{D_n^{V}}(f) = c_n \int_{SL(2,\R)} f\left(g J_n \right)\,d\eta.
	\end{equation}
\subsection{Reducing to Siegel--Veech formula in linearly dependent case} \label{sec:pf_dependent}
	In this section, we will prove that the coefficients $b_1$ and $b_{-1}$ in Equation (\ref{eq:LD_case_coefficients}) are given by $b_1 = b_{-1} = \frac{1}{c(q)}$ by reducing to the Siegel--Veech Primitive Integral Formula (Theorem \ref{thm:Veech}). That is, we will prove the following:
	
	\begin{lemma} \label{lem:Siegel_Linearly_Dependent_Case}
		For any $f \in B_c(\R^2\times \R^2)$, 
		$$ T_{LD_{\pm 1}^{V}}(f) = \frac{1}{c(q)} \int_{\R^2} f(v,\pm v) \,dv$$
		where $c(q)$ is the Poincar\' e volume of the unit tangent bundle over $\mathbb{H}^2 / H_q$.
	\end{lemma}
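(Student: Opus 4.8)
The plan is to reduce the computation of $T_{LD_{\pm 1}^{V}}$ directly to the classical ($k=1$) Siegel--Veech formula of Theorem~\ref{thm:Veech}. The essential observation is that the orbit $LD_{\pm 1}^{V} = \{(v,\pm v) : v\in V_q\}$ is parametrized by a single copy of $V_q$ through the injective map $v\mapsto (v,\pm v)$, so that summing $f$ over this orbit is nothing more than summing a suitable one-variable function over $V_q$.

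Concretely, I would first fix a sign and define $\tilde{f}_{\pm}\in B_c(\R^2)$ by $\tilde{f}_{\pm}(x) = f(x,\pm x)$. Since $x\mapsto (x,\pm x)$ is a linear embedding of $\R^2$ into $\R^2\times\R^2$, the function $\tilde{f}_{\pm}$ is bounded, measurable, and supported on the preimage of $\operatorname{supp}(f)$, which is compact; hence $\tilde{f}_{\pm}\in B_c(\R^2)$ and Theorem~\ref{thm:Veech} applies to it. Next, using that $SL(2,\R)$ acts diagonally, for any $g$ we have $g\cdot(v,\pm v) = (gv,\pm gv)$, so
$$f_{LD_{\pm 1}^{V}}([g]) = \sum_{v\in V_q} f(gv,\pm gv) = \sum_{v\in V_q}\tilde{f}_{\pm}(gv) = \widehat{\tilde{f}_{\pm}}([g]),$$
where $\widehat{\tilde{f}_{\pm}}$ is the classical Siegel--Veech transform from Definition~\ref{def:S--VTransform} with $k=1$. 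Integrating over $Y_q$ and invoking Theorem~\ref{thm:Veech} then yields
$$T_{LD_{\pm 1}^{V}}(f) = \int_{Y_q}\widehat{\tilde{f}_{\pm}}\,d\mu = \frac{1}{c(q)}\int_{\R^2}\tilde{f}_{\pm}(x)\,dx = \frac{1}{c(q)}\int_{\R^2}f(x,\pm x)\,dx,$$
which is exactly the asserted identity, and in particular identifies $b_{\pm 1} = 1/c(q)$ in Equation~(\ref{eq:LD_case_coefficients}).

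This argument is almost entirely formal, so there is no serious obstacle; the only points requiring care are bookkeeping rather than substance. I would make sure that the reindexing of the sum from pairs $(v,\pm v)\in LD_{\pm 1}^{V}$ to vectors $v\in V_q$ is a genuine bijection (immediate from injectivity of $v\mapsto (v,\pm v)$), and that the interchange of sum and integral together with the application of Theorem~\ref{thm:Veech} is justified by the uniform boundedness of the Siegel--Veech transform (Lemma~16.10 of \cite{Veech98}), which guarantees $\widehat{\tilde{f}_{\pm}}$ is integrable on $Y_q$. The identification of $c(q)$ with the Poincar\'e volume of the unit tangent bundle over $\mathbb{H}^2/H_q$ is already built into the normalization fixed in Section~\ref{sec:decomp_orbits}, so no further work is needed there.
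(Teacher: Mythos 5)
Your proposal is correct and follows essentially the same route as the paper's own proof: both define the one-variable function $\bar{f}_{\pm}(u) = f(u,\pm u)$, identify the sum over $LD_{\pm 1}^{V}$ with the classical ($k=1$) Siegel--Veech transform of this function, and apply Theorem~\ref{thm:Veech}. Your added checks (that $\bar{f}_{\pm}\in B_c(\R^2)$, that $v\mapsto(v,\pm v)$ is a bijection onto the orbit, and that integrability is justified by Lemma~16.10 of \cite{Veech98}) are correct bookkeeping that the paper leaves implicit.
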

 	\begin{proof}
 		Given $f \in B_c(\R^2\times \R^2)$, define $\bar{f} \in B_c(\R^2)$ by
 		$$\bar{f}_{\pm}(u) = f(u, \pm u).$$
 		So we now compute
 		\begin{align*}
 			T_{LD_{\pm 1}^{V}} (f) &= \int_{Y_q} \sum_{v\in V_q} f(gu, \pm gu) \,d\mu([g]) \\
 			&= \int_{Y_q} \sum_{u\in V_q} \bar{f}_{\pm}(gu)\,d\mu([g]) \\
 			&= \frac{1}{c(q)} \int_{\R^2} \bar{f}_{\pm}(x)\,dx \tag{by Theorem \ref{thm:Veech}}\\
 			&= \frac{1}{c(q)} \int_{\R^2} f(x,\pm x)\,dx.
 		\end{align*}
 		This concludes the proof of the lemma.
 	\end{proof}

We've now shown $b_{\pm 1} = c(q)^{-1}$, in the next section, we address the coefficients $c_n$ for $n\in N_q$.
\subsection{Coefficients on loci with fixed determinant} \label{sec:pf_independent}
 The goal of this section is to prove that each $c_n = c(q)^{-1} \varphi_q(n)$ for $n \in N_q$. We will first decompose $D_n^{V}$ into $H_q$ orbits under the diagonal action, showing there are $\varphi_q(n)$ orbits which each contribute equally to $T_{D_n^{V}}$. After showing this, we will find the value over a single orbit.
 
\begin{lemma} \label{lem:N_q equality}
	Let $n \in \Z_0[\lambda_q]$. There exists $v_1 , v_2 \in V_q$ with $\det(v_1 v_2) = n$ if and only if there exists $m\in \Z[\lambda_q]$ with $0\leq m< \lambda_q|n|$ and $\begin{bmatrix}
		m \\ n 
	\end{bmatrix} \in V_q$.
	
	In particular, the equality in Equation (\ref{eq:N_q}) for $N_q$ holds. 
\end{lemma}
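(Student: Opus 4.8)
The plan is to prove the two directions of the equivalence separately, with the reverse direction essentially immediate and the forward direction carrying all of the content; the equality in Equation~\ref{eq:N_q} then follows by combining them.

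For the ``if'' direction, suppose $m\in\Z[\lambda_q]$ satisfies $1\le m\le|n|$ and $\begin{bmatrix} m\\ n\end{bmatrix}\in V_q$. Since the identity lies in $H_q$, the vector $\begin{bmatrix}1\\0\end{bmatrix}$ lies in $V_q$, so I would simply take $v_1=\begin{bmatrix}1\\0\end{bmatrix}$ and $v_2=\begin{bmatrix}m\\n\end{bmatrix}$; then $\det(v_1\,v_2)=\det\begin{bmatrix}1&m\\0&n\end{bmatrix}=n$, producing the required pair.

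For the ``only if'' direction, suppose $v_1,v_2\in V_q$ with $\det(v_1\,v_2)=n$. The first step is to normalize $v_1$. Because $V_q=H_q\cdot\begin{bmatrix}1\\0\end{bmatrix}$ and $H_q$ is a group, $V_q$ is $H_q$-invariant, and since $v_1\in V_q$ there is $A\in H_q$ with $A\begin{bmatrix}1\\0\end{bmatrix}=v_1$. Applying $A^{-1}$ to the pair, and using that $\det(A^{-1})=1$ leaves the determinant unchanged, I would replace $(v_1,v_2)$ by $\left(A^{-1}v_1,A^{-1}v_2\right)=\left(\begin{bmatrix}1\\0\end{bmatrix},\begin{bmatrix}m'\\r\end{bmatrix}\right)$, where the second vector still lies in $V_q$. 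Evaluating $\det\begin{bmatrix}1&m'\\0&r\end{bmatrix}=r$ forces $r=n$, so I obtain a vector $\begin{bmatrix}m'\\n\end{bmatrix}\in V_q$ with second coordinate exactly $n$ and $m'\in\Z[\lambda_q]$. Note also that Proposition~\ref{prop:H_q_Characterization} guarantees $|n|\ge 1$, so the target interval $[1,|n|]$ is nonempty.

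It remains to replace $m'$ by a representative lying in $[1,|n|]$, and I expect this to be the main obstacle. The tools available are the translation $T=\begin{bmatrix}1&\lambda_q\\0&1\end{bmatrix}\in H_q$, whose powers send $\begin{bmatrix}m'\\n\end{bmatrix}$ to $\begin{bmatrix}m'+j\lambda_q n\\n\end{bmatrix}\in V_q$ while fixing the second coordinate, the element $-I=S^2\in H_q$, and the characterization of columns of $H_q$ in Proposition~\ref{prop:H_q_Characterization}, by which finding the desired $m$ amounts to exhibiting $m\in[1,|n|]$ with $(m,n)_q=1$ that completes to an $H_q$-matrix. For $q=3$ one has $\lambda_3=1$, so the translates shift $m'$ by exactly $n$ and reducing modulo $n$ lands in $[1,|n|]$ exactly as in the classical $SL(2,\Z)$ totient setting. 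For $q>3$, however, $\lambda_q>1$, so the translates are spaced $\lambda_q|n|>|n|$ apart and a single $T$-orbit need not meet the shorter interval $[1,|n|]$; the naive ``reduce modulo $n$'' argument breaks down, and one must instead exploit the finer arithmetic of $\Z[\lambda_q]$ via the greatest common $q$-divisor together with Proposition~\ref{prop:H_q_Characterization} to guarantee a representative in range. Once such an $m$ is produced, both descriptions of $N_q$ in Equation~\ref{eq:N_q} coincide, completing the proof.
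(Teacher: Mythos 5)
Your ``if'' direction and your normalization step are exactly the paper's: the paper likewise completes $v_1 = (a,c)^T$ to a matrix $g = \begin{bmatrix} a & \gamma \\ c & \delta \end{bmatrix} \in H_q$ (existence of the second column being precisely the statement $v_1 \in V_q$) and left-multiplies the pair by $g^{-1}$ to reach $\begin{bmatrix} 1 & \ell \\ 0 & n \end{bmatrix}$ with $(\ell, n)^T \in V_q$. The genuine gap is that you never carry out the one step in which the content of the lemma lives: producing $m \in [1,|n|]$ with $(m,n)^T \in V_q$. Your proposal ends by hoping that ``the finer arithmetic of $\Z[\lambda_q]$ via the greatest common $q$-divisor'' will furnish such an $m$ --- but no property of $(\cdot,\cdot)_q$ is actually invoked, no candidate $m$ is constructed, and Proposition~\ref{prop:H_q_Characterization} is cited only to describe the target, not as a tool. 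Since the forward implication (and hence the equality in Equation~\ref{eq:N_q}) is exactly this step, what you have written is an incomplete proof rather than an alternative one: a proof that concludes ``once such an $m$ is produced'' has not produced it.

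To be fair, your diagnosis of the obstacle is accurate, and it lands on precisely the point the paper treats most briskly. The paper asserts that applying $T^{\pm 1}$ repeatedly yields $m = \ell - j\lambda_q \in [1,|n|]$, but left multiplication by $T^{-j}$ in fact sends $\ell$ to $\ell - j\lambda_q n$, so the available translates are spaced $\lambda_q |n|$ apart; for $q > 3$, where $\lambda_q > 1$, an arithmetic progression with that gap need not meet an interval of length $|n|-1$, so the reduction is immediate only for $q = 3$, where $\lambda_3 = 1$, $\ell \in \Z$, and $[1,|n|]$ contains a complete residue system modulo $n$. (Note also that for the later orbit decomposition in Lemma~\ref{lem:Decompose_Into_SL_2_Z_Orbits} one needs the stronger, same-orbit version of this reduction, since the only elements of $H_q$ carrying $\begin{bmatrix} 1 & \ell \\ 0 & n \end{bmatrix}$ to $\begin{bmatrix} 1 & m \\ 0 & n \end{bmatrix}$ are powers of $T$, forcing $m \equiv \ell$ modulo $\lambda_q n$.) So you correctly located the crux, but locating it is not resolving it: a complete argument must analyze which $\ell \in \Z[\lambda_q]$ can occur with $(\ell,n)^T \in V_q$ modulo $\lambda_q n$, and your attempt stops short of supplying that.
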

\begin{proof}
	First, suppose there exists $m \in \Z[\lambda_q]$ with $0\leq m< \lambda_q|n|$ and $\begin{bmatrix} m \\ n \end{bmatrix} \in V_q$. Set $v_1 = \begin{bmatrix}
		1 \\ 0
\end{bmatrix}$, which is in $V_q$ since $T v_1 = v_1$, and set $v_2 = \begin{bmatrix}
	m \\ n
\end{bmatrix}$. Then $v_1, v_2 \in V_q$ with determinant $n$, so $n \in N_q$. 

Conversely suppose $v_1, v_2 \in V_q$ with $\det(v_1v_2) = n$. Let $g\in H_q$ so that $ge_1 = v_1$. Then,
$$g^{-1} [v_1 \, v_2] = [e_1\, g^{-1}v_2].$$
Since the determinant is $n$,
$$g^{-1}v_2 = \begin{bmatrix}
	\ell \\
	n
\end{bmatrix}$$ for some $\ell \in \Z[\lambda_q]$. Applying the matrix $T$ to the left $j$ times for some $j\in \Z$ gives
$$T^{j}g^{-1} [v_1\,v_2] = [e_1 \, T^jg^{-1} v_2]$$ where
$$T^j g^{-1}v_2 = \begin{bmatrix}
	\ell + j n \lambda_q \\ n
\end{bmatrix}.$$

Thus we can find $j \in \Z$ so that $m = \ell + jn\lambda_q$ satisfies 
$$0 \leq m < \lambda_q |n|,$$
and the proof is complete.
\end{proof}
\begin{lemma} \label{lem:Decompose_Into_SL_2_Z_Orbits}
 For $n \in N_q$	The subset $D_n^{V}$ is the union of $\varphi_q(n)$ different orbits
 	$$D_n^{V} = \bigsqcup_{\stackrel{0\leq m < |n|\lambda_q}{(m,n)^T\in V_q}} E_n^{(m)},$$
	where 
	$$E_n^{(m)} = \left\{ \gamma \cdot \begin{bmatrix}
		1 & m \\ 0 & n
\end{bmatrix}	 : \gamma \in H_q\right\}.$$
\end{lemma}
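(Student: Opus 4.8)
The plan is to prove the statement in two parts: first that $D_n^V$ decomposes as the \emph{disjoint} union of the sets $E_n^{(m)}$ over admissible $m$, and second that each $E_n^{(m)}$ is genuinely a single $H_q$-orbit. Lemma~\ref{lem:N_q equality} does most of the heavy lifting for surjectivity: it tells us that any pair $(v_1,v_2)\in D_n^V$ can be moved by some $h\in H_q$ so that $h\cdot(v_1,v_2) = (e_1, (m,n)^T)$ with $1\le m\le |n|$ and $(m,n)^T\in V_q$, where $e_1 = (1,0)^T$. The point is that $e_1 = h\cdot v_1$ forces $v_1 = h^{-1}e_1$, and then the matrix $\gamma = h^{-1}$ carries $(e_1,(m,n)^T)$ back to $(v_1,v_2)$; since $\gamma\in H_q$, this exhibits $(v_1,v_2)\in E_n^{(m)}$. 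So every element of $D_n^V$ lies in at least one $E_n^{(m)}$, and conversely each $E_n^{(m)}\subseteq D_n^V$ because $\det$ is $SL(2,\R)$-invariant and $(e_1,(m,n)^T)$ has determinant $n$ with both columns in $V_q$ (using $e_1\in V_q$ as noted in Lemma~\ref{lem:N_q equality}).

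First I would pin down that each $E_n^{(m)}$ is exactly the $H_q$-orbit of the base point $\bigl(e_1,(m,n)^T\bigr)$ under the diagonal action, which is immediate from the definition once one observes $\gamma\cdot J_{n,m} = \bigl(\gamma e_1,\ \gamma (m,n)^T\bigr)$ where the columns of $J_{n,m} = \begin{bmatrix} 1 & m\\ 0 & n\end{bmatrix}$ are precisely $e_1$ and $(m,n)^T$. Thus $E_n^{(m)}$ is a single orbit essentially by construction, and the count of orbits is at most the number of admissible $m$, namely $\varphi_q(n)$ by definition of the $q$-geometric Euler totient function.

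The main obstacle will be disjointness: showing that distinct admissible values $m\ne m'$ give distinct orbits $E_n^{(m)}\ne E_n^{(m')}$, so that the union is genuinely disjoint and the orbit count is exactly $\varphi_q(n)$ rather than fewer. The plan is to suppose $\gamma\cdot(e_1,(m,n)^T) = (e_1,(m',n)^T)$ for some $\gamma\in H_q$ and derive $m = m'$. From $\gamma e_1 = e_1$ the first column of $\gamma$ is $e_1$, so $\gamma = \begin{bmatrix} 1 & s\\ 0 & 1\end{bmatrix}$ for some entry $s$; since $\gamma\in H_q\subseteq SL(2,\Z[\lambda_q])$ and the upper-triangular elements of $H_q$ are exactly the powers of $T$, we get $s = j\lambda_q$ for some $j\in\Z$. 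Applying this $\gamma$ to $(m,n)^T$ gives $(m + j\lambda_q n,\ n)^T = (m',n)^T$, so $m' = m + j\lambda_q n$. But both $m$ and $m'$ lie in the range $1\le \,\cdot\,\le |n|$, an interval of length strictly less than $\lambda_q |n|$ (since $\lambda_q = 2\cos(\pi/q) \ge 1$ for $q\ge 3$, and the displacement $|j\lambda_q n|\ge \lambda_q|n| > |n|$ when $j\ne 0$), forcing $j = 0$ and hence $m = m'$. The one subtlety I would check carefully is that the only upper-triangular unipotent elements of $H_q$ with integer-translation entry are the powers of $T$ — this uses the stabilizer structure of $e_1$ in $H_q$ and should follow from Proposition~\ref{prop:H_q_Characterization} together with the explicit generators $S,T$ of $H_q$.
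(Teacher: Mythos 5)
Your proof is correct, and its first half (surjectivity via Lemma~\ref{lem:N_q equality}, plus the observation that each $E_n^{(m)}$ is a single $H_q$-orbit of the base point $\bigl(e_1,(m,n)^T\bigr)$) is exactly the paper's argument. Where you diverge is disjointness. The paper does this by a direct two-line computation: if $\gamma\, J_{n,m_1} = J_{n,m_2}$ with $\gamma\in H_q$, then $\gamma = J_{n,m_2}J_{n,m_1}^{-1} = \begin{bmatrix} 1 & (m_2-m_1)/n \\ 0 & 1\end{bmatrix}$, and since $1\le m_1 < m_2 \le |n|$ forces the off-diagonal entry to lie strictly between $0$ and $1$, Proposition~\ref{prop:H_q_Characterization} immediately excludes $\gamma$ from $H_q$ --- no classification of the stabilizer of $e_1$ is ever invoked. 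You instead route through the claim that the unipotent upper-triangular elements of $H_q$ are exactly the powers of $T$ (i.e.\ $s = j\lambda_q$), which you rightly flag as the one point needing verification. That claim is true (it is the statement that the cusp at infinity has width $\lambda_q$, and it can be derived from Proposition~\ref{prop:H_q_Characterization} plus discreteness: the unipotent subgroup of $H_q$ is cyclic, and a generator with entry $\lambda_q/k$ for $k\ge 2$ would have entry in $(0,1)$, which the proposition forbids), but it is more machinery than the lemma needs: your own equation $m' = m + sn$ with $1\le m,m'\le |n|$ already gives $|s| \le (|n|-1)/|n| < 1$, so the proposition finishes the argument directly and the detour through $s\in\lambda_q\Z$ can be deleted. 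Two small further notes: your parenthetical inequality $\lambda_q|n| > |n|$ fails at $q=3$ where $\lambda_3 = 1$, though your conclusion survives since $|m'-m|\le |n|-1 < |n| \le \lambda_q|n|$ still forces $j=0$; and your observation that $E_n^{(m)}\subseteq D_n^V$ (determinant invariance plus $H_q$-stability of $V_q$) is a point the paper leaves implicit, so including it is a mild improvement in completeness.
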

\begin{proof}
We will first show that the decomposition of every element in $D_n^{
V}$ can be written as an element $E_n^{(m)}$ for some $m$. 
	
	Let $$\begin{bmatrix}
		a & b \\ c & d
\end{bmatrix}	  \in D_n^{V}$$
	
	From the proof of Lemma \ref{lem:N_q equality}, there exists a matrix $h \in H_q$ with
	$$h \begin{bmatrix} a & b \\ c & d\end{bmatrix} = \begin{bmatrix} 1 & m \\ 0 & n \end{bmatrix} \text{ for } 0 \leq m < \lambda_q |n|.$$
	Since $\begin{bmatrix}
	b \\ d
\end{bmatrix} \in V_q$, we also have $\begin{bmatrix} m \\ n\end{bmatrix} = h\begin{bmatrix}
	b \\ d
\end{bmatrix} \in V_q$. We have now shown every element in $D_n^{V}$ is in $E_n^{(m)}$ for some $m$ with $0\leq m < \lambda_q|n|$ and $\begin{bmatrix} m \\ n\end{bmatrix} \in V_q$.

To see that we have no duplicate representatives of our orbits, let $0< m_1, m_2 \leq \lambda_q|n|$ with $m_1\neq m_2$ and $\begin{bmatrix} m_1 \\ n\end{bmatrix}, \begin{bmatrix} m_2 \\ n\end{bmatrix}\in V_q$. Without loss of generality suppose $m_2 > m_1$. If the representatives
$$\begin{bmatrix}
	1 & m_1 \\ 0 & n
\end{bmatrix} \quad \text{and}\quad\begin{bmatrix}
	1 & m_2 \\ 0 &n
\end{bmatrix}$$
were in the same $H_q$ orbit, there would exist an element $\begin{bmatrix}
	a & b \\ c & d
\end{bmatrix} \in H_q$ such that 
$$\begin{bmatrix}
	a & b\\ c & d
\end{bmatrix} \begin{bmatrix}
	1 & m_1 \\ 0 & n
\end{bmatrix}  = \begin{bmatrix}
	1 & m_2 \\ 0 & n
\end{bmatrix}.$$ 
This implies $a = 1, c= 0, d=1,$ and $b = \frac{m_2-m_1}{n}.$ Since $0< m_2 - m_1 < \lambda_q n$, we have $0< b < \lambda_q$. This is a parabolic element with upper right entry smaller than the generating matrix $T$, and thus not in $H_q$. Therefore we conclude that $E_n^{(m)}$ are $\varphi_q(n)$ distinct $H_q$ orbits whose union is all of $D_n^{V}$.
\end{proof}

\begin{lemma} \label{lem:Fibration_On_One_Orbit}
	For a fixed $m$ with $0\leq m < \lambda_q|n|$ and $\begin{bmatrix} m \\ n\end{bmatrix}\in V_q$, 
	$$T_{E_n^{(m)}}(f) =\frac{1}{c(q)} \int_{SL(2,\R)} f\left(gJ_n \right)\,d\eta_2.$$ 
\end{lemma}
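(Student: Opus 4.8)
The plan is to recognize $T_{E_n^{(m)}}(f)$ as an integral over the single $H_q$-orbit $E_n^{(m)} = H_q\cdot J_{n,m}$, where I write $J_{n,m} = \begin{bmatrix} 1 & m \\ 0 & n \end{bmatrix}$ for the chosen representative, and then to \emph{unfold} the resulting sum over $H_q$ against the integral over $Y_q = SL(2,\R)/H_q$ into a single integral over $SL(2,\R)$. First I would note that since $\det J_{n,m} = n \neq 0$, the matrix $J_{n,m}$ is invertible, so the orbit map $H_q \to E_n^{(m)}$, $\gamma \mapsto \gamma J_{n,m}$, is a bijection; equivalently the stabilizer of $J_{n,m}$ in $H_q$ is trivial. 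Writing the diagonal action of $g\in SL(2,\R)$ on a pair of columns as left matrix multiplication, this gives
$$f_{E_n^{(m)}}([g]) = \sum_{w\in E_n^{(m)}} f(gw) = \sum_{\gamma\in H_q} f\big(g\gamma J_{n,m}\big),$$
and one checks this descends to $Y_q$, since replacing $g$ by $g\gamma_0$ with $\gamma_0\in H_q$ merely reindexes the sum.

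Next I would apply the quotient integral (Weil) formula for the projection $\pi : SL(2,\R)\to Y_q$ with discrete fiber $H_q$ carrying counting measure: for $F\in C_c(SL(2,\R))$,
$$\int_{SL(2,\R)} F\,d\eta = \int_{Y_q} \sum_{\gamma\in H_q} F(g\gamma)\,d\eta_*([g]),$$
where $\eta_*$ is the induced measure on $Y_q$, whose total mass equals $\eta$ of a fundamental domain, namely $\eta_*(Y_q) = c(q)$ by the normalization fixed in Section~\ref{sec:decomp_orbits}. Since $\mu$ is the \emph{probability} Haar measure on $Y_q$, it follows that $\eta_* = c(q)\,\mu$. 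Taking $F(g) = f(gJ_{n,m})$, which is legitimate because $f$ has compact support so the inner sum is finite and the interchange is justified, and dividing by $c(q)$, I obtain
$$T_{E_n^{(m)}}(f) = \int_{Y_q} \sum_{\gamma\in H_q} f(g\gamma J_{n,m})\,d\mu([g]) = \frac{1}{c(q)}\int_{SL(2,\R)} f(g J_{n,m})\,d\eta(g).$$

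It remains to replace $J_{n,m}$ by $J_n = J_{n,1}$. Here I would invoke that $SL(2,\R)$ is unimodular, so $\eta$ is right-invariant. A direct computation gives $J_{n,m} = u\,J_n$ with $u = \begin{bmatrix} 1 & (m-1)/n \\ 0 & 1 \end{bmatrix}\in SL(2,\R)$, so after the substitution $g\mapsto gu^{-1}$, which preserves $\eta$, the integrand $f(gu^{-1}J_{n,m}) = f(gJ_n)$ and hence $\int_{SL(2,\R)} f(gJ_{n,m})\,d\eta = \int_{SL(2,\R)} f(gJ_n)\,d\eta$, giving the stated formula (with $\eta$ the measure written $\eta_2$ in the statement). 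I expect the only delicate point to be the bookkeeping of the normalization constant in the unfolding step: one must check carefully that the induced measure $\eta_*$ genuinely has total mass $c(q)$ (equivalently that $\eta_*(Y_q) = c(q)$, as recorded earlier) so that the factor $1/c(q)$ emerges correctly. The triviality of the stabilizer and the right-invariance replacement are then formal.
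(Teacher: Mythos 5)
Your proof is correct and takes essentially the same route as the paper: unfold the sum over the $H_q$-orbit of $J_{n,m}$ against $\mu$ using the quotient formula with the normalization $\pi_*(\eta)(Y_q)=c(q)$ (so $\eta_* = c(q)\,\mu$), then use unimodularity of $SL(2,\R)$, i.e.\ right-invariance of $\eta$, to replace $J_{n,m}$ by $J_n$ via the unipotent factor $u$ with $J_{n,m}=u\,J_n$ (your $u$ is exactly the inverse of the matrix $\begin{bmatrix} 1 & \frac{1-m}{n} \\ 0 & 1 \end{bmatrix}$ the paper inserts). The extra points you flag --- triviality of the stabilizer, well-definedness of the unfolded sum on $Y_q$, and the justification of the interchange --- are correct and merely make explicit what the paper's proof leaves implicit.
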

\begin{proof}
	Let $\pi: SL(2,\R) \to Y_q$ be the projection map $g\mapsto [g]$. Recall we normalize $\eta$ so that $\pi_*(\eta) (Y_q) =c(q)$. Hence $\pi_*\left(c(q)^{-1}\eta\right) = \mu$. Moreover, to push a function from $SL(2,\R)$ to a function  on $X_2$, we have to sum over the orbits $H_q$. Thus, 
	$$\frac{1}{c(q)} \int_{SL(2,\R)} f\left(g\begin{bmatrix}
		1 & m \\ 0 & n
	\end{bmatrix} \right)\,d\eta = \int_{Y_q} \sum_{\gamma \in H_q}f\left(g \cdot \gamma\cdot \begin{bmatrix}
		1 & m \\ 0 & n
	\end{bmatrix} \right) \,d\mu = T_{E_n^{(m)}}(f).$$ 
	
	For the last part of the lemma, we compute the following
	\begin{align*}
		&\frac{1}{c(q)} \int_{SL(2,\R)} f\left(g\begin{bmatrix}
		1 & m \\ 0 & n
	\end{bmatrix} \right)\,d\eta  \\
		&= \frac{1}{c(q)} \int_{SL(2,\R)} f\left(g\begin{bmatrix}
		1 & \frac{1-m}{n}\\ 0 &1\end{bmatrix} \begin{bmatrix} 1 & m \\ 0 & n \end{bmatrix}\right)\,d\eta\left(g\begin{bmatrix}
		1 & \frac{1-m}{n}\\ 0 &1
	\end{bmatrix}\right) \\
	&= \frac{1}{c(q)} \int_{SL(2,\R)} f\left(g\begin{bmatrix}
		1 & 1\\ 0 &n
	\end{bmatrix} \right)\,d\eta(g) 
	\end{align*}
	where the last equality follows from the fact that $SL(2,\R)$ is a unimodular group, so the Haar measure $\eta$ is both left and right invariant under the action of $SL(2,\R)$. 
\end{proof}

Lemma \ref{lem:Fibration_On_One_Orbit} shows that $T_{E_n^{(m)}}(f)$ is constant for with respect to $m$. Hence we conclude
$$T_{D_n^{V}} = \sum_{\stackrel{0\leq m<\lambda_q |n|}{(m,n)^T\in V_q}} T_{E_n^{(m)}} = \frac{\varphi_q(n)}{c(q)}  \int_{SL(2,\R)} f\left(gJ_n \right)\,d\eta(g).$$

In conclusion, we've now shown that 
$$T_{D_n^{V}} = \frac{\varphi_q(n)}{c(q)} \int_{SL(2,\R)} f\left(gJ_n\right)\,d\eta\left(g\right)$$
As well as 
$$T_{LD_{\pm 1}} = \frac{1}{c(q)} \int_{\R^2} f(x,\pm x) \,dx.$$

Putting these results together with Corollary~\ref{cor:Reduced_to_nonzero_coeffiecients}, we have now shown Theorem~\ref{thm:The_Big_Kahuna} holds.

\section{Higher moments}\label{sec:Higher Moments}
We will prove Theorem~\ref{thm: Higher Moments} which is the generalization of Theorem~\ref{thm:The_Big_Kahuna} which corresponds to higher moments of the classical Siegel--Veech Transform on $\R^2$. 

\subsection{Decomposition into orbits}
	We first decompose $(\R^2)^k$ into $SL(2,\R)$ orbits. Given a point in $(\R^2)^k$, either all the terms are linearly dependent, or there exist two terms in the $k$-tuples which are linearly dependent.
		
	\begin{lemma}\label{lem: Higher moment decomp into oribts}
		The following decomposes $(\R^2)^k$ into disjoint $SL(2,\R)$ orbits:
		
		$$(\R^2)^k =  \left( \bigsqcup_{\lambda} LD_{\lambda}\right) \sqcup \left(\bigsqcup_{n, \lambda, \alpha, \beta} D_{n,\lambda,\alpha,\beta}\right).$$
		In the linearly dependent case, $$LD_{\lambda} = \{\lambda x: x\in \R^2\}$$  for $\lambda \in \R^k$ with first nonzero entry (if it exists) given by 1. 
		
		In the linearly independent case,
		$$D_{n,\lambda, \alpha, \beta} \stackrel{def}{=} \{(\lambda x, \alpha x + \beta y) : \det(x\, y) = n\},$$
		where $n \in \R_0$ is the determinant of the first nonzero vector with the first linearly independent vector. For $0\leq j < k$ we have $\lambda \in \R^j$ where the first nonzero entry is 1 and $\alpha, \beta \in \R^{k-j}$ where $\alpha = (0, \alpha_2,\ldots, \alpha_{k-j})$ and $\beta = (1, \beta_2,\ldots, \beta_{k-j})$.
	\end{lemma}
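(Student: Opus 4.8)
The plan is to mirror the proof of Lemma~\ref{lem:Decomposition_Into_Orbits}, realizing each set $LD_\lambda$ and each $D_{n,\lambda,\alpha,\beta}$ as a single orbit of the diagonal $SL(2,\R)$-action, and then checking that the listed sets cover $(\R^2)^k$ and are pairwise disjoint. Two transitivity facts drive everything: $SL(2,\R)$ acts transitively on $\R^2_0$, and for each $n\in\R_0$ it acts transitively on the set $P_n = \{(x,y)\in\R^2\times\R^2 : \det(x\,y)=n\}$ of ordered bases of determinant $n$. The second fact holds because any two such pairs $(x,y)$ and $(x',y')$ are related by a unique $A\in GL(2,\R)$ with $Ax=x'$, $Ay=y'$, and since $\det(A)\cdot n = \det(x'\,y') = n$ forces $\det(A)=1$, that $A$ lies in $SL(2,\R)$.

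First I would show each listed set is a single orbit. The map $x\mapsto \lambda x = (\lambda_1 x,\dots,\lambda_k x)$ from $\R^2_0$ onto $LD_\lambda$ (for $\lambda\neq 0$, with $x$ ranging over $\R^2_0$) intertwines the two actions, so transitivity on $\R^2_0$ makes $LD_\lambda$ a single orbit; the value $\lambda=0$ with $x=0$ gives the fixed point $LD_0=\{0\}$. Likewise the map $\Phi(x,y)=(\lambda x,\alpha x+\beta y)$ from $P_n$ onto $D_{n,\lambda,\alpha,\beta}$ satisfies $\Phi(g\cdot(x,y)) = g\cdot\Phi(x,y)$, so transitivity on $P_n$ makes $D_{n,\lambda,\alpha,\beta}$ a single orbit.

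Next, covering. Given $(v_1,\dots,v_k)$, if all the $v_i$ lie on a common line through the origin (including the all-zero case), let $x$ be the first nonzero $v_i$ (or $x=0$ if all vanish); writing $v_i=\lambda_i x$ and normalizing so the first nonzero $\lambda_i$ equals $1$ places the point in a unique $LD_\lambda$. Otherwise, let $x=v_p$ be the first nonzero vector and let $j$ be the largest index with $v_1,\dots,v_j\in\spn(x)$, so that $y=v_{j+1}$ is the first vector not proportional to $x$. Then $v_1,\dots,v_j$ are multiples of $x$ (recorded by $\lambda$, first nonzero entry $1$), $v_{j+1}=y$ (so $\alpha_1=0,\beta_1=1$), and each later $v_{j+i}$ expands uniquely in the basis $(x,y)$ as $\alpha_i x+\beta_i y$; with $n=\det(x\,y)\neq 0$ this exhibits the point in some $D_{n,\lambda,\alpha,\beta}$. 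Note that $j\geq 1$ automatically, since the first nonzero vector is proportional to itself, so the index $j=0$ simply contributes no points.

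Finally, disjointness: distinct orbits are disjoint, so it suffices to recover the parameters from any point of a set, i.e.\ to see they are $SL(2,\R)$-invariants. Total linear dependence is preserved by the action, separating the $LD$ and $D$ families; within $LD$ the normalized ratios $\lambda_i$ are read off the point, and within $D$ the integer $j$, the canonical vectors $x$ (first nonzero) and $y$ ($(j+1)$st), the determinant $n=\det(x\,y)$, and the unique basis coordinates $\lambda_i,\alpha_i,\beta_i$ are all determined, so distinct parameter tuples index disjoint sets. The main obstacle is the fixed-determinant transitivity on $P_n$ together with the bookkeeping that pins $(n,\lambda,\alpha,\beta)$ down as genuine invariants---in particular handling the leading zeros and the ``first nonzero entry equals $1$'' normalization so that the two families are parametrized without redundancy.
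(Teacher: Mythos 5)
Your proposal is correct and takes essentially the same route as the paper's proof: realize each $LD_{\lambda}$ and $D_{n,\lambda,\alpha,\beta}$ as a single orbit via transitivity of $SL(2,\R)$ on $\R^2\setminus\{0\}$ and on fixed-determinant pairs (the paper cites Lemma~\ref{lem:Decomposition_Into_Orbits} for the latter, which you reprove directly), cover $(\R^2)^k$ by tracking the first nonzero vector and the first vector linearly independent of it, and obtain disjointness because the normalized parameters are $SL(2,\R)$-invariants recoverable from any point. Your extra care at the edge cases---restricting $x$ to $\R^2\setminus\{0\}$ when $\lambda\neq 0$ and noting that $j=0$ never arises in the covering---is a minor refinement of, not a departure from, the paper's argument.
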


		\begin{proof}
			We first claim that $LD_{\lambda}$ and $D_{n,\lambda, \alpha, \beta}$ can be written as $SL(2,\R)$ orbits. Indeed since $SL(2,\R)$ acts transitively and linearly by matrix multiplication on $\R^2\setminus\{0\}$ we can write
			$$SL(2,\R) \cdot \lambda \begin{bmatrix}
			1 \\ 0
			\end{bmatrix} = LD_{\lambda}.$$
			Similarly since $SL(2,\R)$ acts transitively on determinant $n$ subsets as proved in Lemma \ref{lem:Decomposition_Into_Orbits} and linearly on $\R^2$, we can write
			$$SL(2,\R) \cdot \left(\lambda \begin{bmatrix}
				1 \\0
			\end{bmatrix}, \alpha \begin{bmatrix} 1 \\ 0\end{bmatrix} + \beta \begin{bmatrix}
				0 \\ n
\end{bmatrix}			 \right) = D_{n,\lambda,\alpha,\beta}.$$

			Next we show that the union of the orbits in fact covers all of $(\R^2)^k$. To do this consider a vector $v = (v_1,v_2,\ldots, v_k) \in (\R^2)^k$. If all $v_i = 0$ then $v \in LD_{\vec{0}}$. Otherwise there is some first nonzero $v_i$ which we will call $x$. If $\dim(\spn(v_1,\ldots, v_k)) = 1$, then every other element will be a linear multiple of $x$. Hence $v \in LD_{\lambda}$ where $\lambda$ has first nonzero entry is $1$ and all remaining entries are real numbers.
			
			If however $\dim(\spn(v_1,\ldots, v_k)) = 2$, then set $y$ to be the first vector after $x$ which is linearly independent of $x$. For all $v_i$ which occur after $y$, $v_i$ can be written as a linear combination of $x$ and $y$, thus written as $\alpha_i x + \beta_i y$ for some real numbers. Since $LD_{\lambda}$ and $D_{n,\lambda,\alpha,\beta}$ are subsets of $(\R^2)^k$ we conclude that 
			$$(\R^2)^k = \left(\bigcup_{\lambda} LD_{\lambda}\right) \cup \left(\bigcup_{n,\lambda,\alpha, \beta} D_{n,\lambda,\alpha,\beta}\right).$$
			
						Finally we finish the proof by proving each of these orbits is distinct. Since all pairs of entries in $LD_{\lambda}$ have determinant $0$ and $SL(2,\R)$ preserves determinants, we know that the $LD_{\lambda}$ and $D_{n,\lambda,\alpha,\beta}$ must be disjoint. 
						
	Now suppose that $LD_{\lambda} = LD_{\lambda'}$. Since the first nonzero vector must have a coefficient of $1$, if $\lambda_j = 1$ is the first nonzero element, then $\lambda_j' = 1$ as well. Now every vector after the first vector is linearly dependent on the first nonzero vector, so there is a unique coefficient and $\lambda = \lambda'$.
	
	Similarly, since we choose a coefficient of $1$ for the first nonzero vector, and a coefficient of $1$ for the first vector which is linearly independent, we have a unique representation of the linear combinations $\alpha_i x + \beta_i y$. Hence the $D_{n,\lambda, \alpha, \beta}$ are also all disjoint. This completes the proof.
		\end{proof}
	
\subsection{Reduction to smaller lemmas}

Using the notation of section~\ref{sec:Notation} we will rewrite $\int_{Y_q} \hat{f} \,d\mu$, reducing the proof of Theorem~\ref{thm: Higher Moments} to smaller lemmas.

Define $D_{n,\lambda,\alpha,\beta}^V = D_{n,\lambda,\alpha, \beta} \cap (V_q)^k$ and $LD_{\lambda}^V = LD_{\lambda} \cap (V_q)^k$. Moreover for $1\leq  m\leq |n|$ with $(m,n)^T \in V_q$ define 
$$E_{n,\lambda,\alpha, \beta}^{(m)} = H_q \cdot \left( \lambda \begin{bmatrix} 1\\ 0 \end{bmatrix}, J_{m,n} \begin{bmatrix}
	\alpha \\ \beta 
\end{bmatrix}\right)$$
where $\begin{bmatrix}
	\alpha \\ \beta
\end{bmatrix}$ is a $2\times (k-j)$ matrix and
$$J_{n,m} =  \begin{bmatrix}
	1 & m \\ 0 & n
\end{bmatrix}$$
corresponds to $J_n = J_{n,1}$ in the $k=2$ case.
By Lemma~\ref{lem:Decompose_Into_SL_2_Z_Orbits}, we can write 
$$D_{n,\lambda,\alpha,\beta} = \bigsqcup_{\stackrel{0\leq m< \lambda_q |n|}{(m,n)^T\in V_q}} E_{n,\lambda,\alpha, \beta}^{(m)}.$$

\begin{lemma}\label{lem:Higher moment decomp}
We have
$$\int_{Y_q} \hat{f} \,d\mu = \sum_{\lambda} T_{LD_{\lambda}^V} + \sum_{n\in N_q} \sum_{\stackrel{0\leq m< \lambda_q |n|}{(m,n)^T \in V_q}} \sum_{\lambda,\alpha, \beta} T_{E_{n,\lambda,\alpha, \beta}^{(m)}}$$
where in the linearly dependent case $\lambda \in \R^k$, where the first element of $\lambda$ must be 1, and any remaining elements of $\lambda$ must be $\pm 1$.

In the linearly independent case, given $n\in N_q$, there exists a unique $0\leq m < \lambda_q|n|$ so that the two vectors lie in the $H_q$ orbit of $J_{m,n}$ 
Given $n$ and $m$, we have $\lambda \in \R^j$ with first entry $1$ and all remaining elements $\pm 1$. Moreover $\alpha = (0, \alpha_2, \ldots, \alpha_{k-j})$ and $\beta = (1, \beta_2, \ldots, \beta_{k-j})$ where $\alpha$ and $\beta$ satisfy Equation~\ref{eq:alphabeta_Criterion} for each $2\leq i\leq k-j$.
\end{lemma}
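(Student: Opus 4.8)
The plan is to follow the blueprint of the $k=2$ proof of Theorem~\ref{thm:The_Big_Kahuna}, partitioning the index set $(V_q)^k$ of the Siegel--Veech sum according to the $SL(2,\R)$-orbit decomposition of Lemma~\ref{lem: Higher moment decomp into oribts}. First I would observe that, since $\hat f([g]) = \sum_{v\in (V_q)^k} f(gv)$ ranges only over tuples of visible vectors, only the $V_q$-restrictions $LD_\lambda^V$ and $D_{n,\lambda,\alpha,\beta}^V$ of the orbits can contribute. Because each $\hat f([g])$ is a finite sum (as $f$ has compact support and $V_q$ is discrete) and $\widehat{|f|}$ is bounded and hence $\mu$-integrable, Tonelli's theorem justifies interchanging the countable sum over orbit labels with the integral, giving
\begin{equation*}
\int_{Y_q} \hat f \, d\mu = \sum_{\lambda} T_{LD_\lambda^V}(f) + \sum_{n,\lambda,\alpha,\beta} T_{D_{n,\lambda,\alpha,\beta}^V}(f).
\end{equation*}
It then remains to identify which labels give a nonempty $V_q$-intersection and to refine the linearly independent terms into the single $H_q$-orbits $E_{n,\lambda,\alpha,\beta}^{(m)}$.

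For the linearly dependent orbits I would argue exactly as in Section~\ref{sec:pf_reduction}. A point of $LD_\lambda$ has the form $(\lambda_1 x, \ldots, \lambda_k x)$, so membership in $(V_q)^k$ forces every $\lambda_i x \in V_q$; in particular no $\lambda_i$ can vanish, since $0$ is never visible by Proposition~\ref{prop:H_q_Characterization}. Combining that proposition with the multiplicativity $(ta,tb)_q = t(a,b)_q$ of Equation~\ref{eq:gcdmult} shows $t x \in V_q$ with $x \in V_q$ only when $t=\pm 1$. Hence the first coordinate forces $\lambda_1=1$ and every remaining $\lambda_i=\pm 1$, which is the claimed constraint.

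For the linearly independent orbits I would first note that a point of $D_{n,\lambda,\alpha,\beta}^V$ contains the two visible vectors $x$ (the $\lambda_1=1$ coordinate) and $y$ (the $\alpha_1=0,\beta_1=1$ coordinate) with $\det(x\,y)=n$, so $n\in N_q$ by definition, and the coefficient argument above again forces $\lambda$ to have first entry $1$ and the rest $\pm 1$. The crux is to promote Lemma~\ref{lem:Decompose_Into_SL_2_Z_Orbits} from pairs to $k$-tuples: the normalization of Lemma~\ref{lem:N_q equality} produces $h\in H_q$ with $h(x\,y)=J_{n,m}$ for a unique $1\le m\le |n|$ with $(m,n)^T\in V_q$, so that $hx=(1,0)^T$ and $hy=(m,n)^T$. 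Applying $h$ coordinatewise sends each intermediate vector $\alpha_i x+\beta_i y$ (for $2\le i\le k-j$) to $J_{n,m}(\alpha_i,\beta_i)^T$, which must again be visible; this yields precisely the membership criterion $(\alpha_i,\beta_i)^T\in J_{n,m}^{-1}V_q$ of Equation~\ref{eq:alphabeta_Criterion} and identifies the normalized representative as $E_{n,\lambda,\alpha,\beta}^{(m)}$. Summing $T_{D_{n,\lambda,\alpha,\beta}^V}=\sum_m T_{E_{n,\lambda,\alpha,\beta}^{(m)}}$ over the valid $m$ and substituting into the displayed identity completes the proof.

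I expect the main obstacle to be the bookkeeping in the linearly independent case: verifying that the passage to the normalized spanning pair is compatible with a decomposition into \emph{genuinely disjoint} orbits $E_{n,\lambda,\alpha,\beta}^{(m)}$, that the normalizing index $m$ is well defined using the $0<b<1$ obstruction from Proposition~\ref{prop:H_q_Characterization} exactly as in Lemma~\ref{lem:Decompose_Into_SL_2_Z_Orbits}, and that the resulting constraint on $(\alpha_i,\beta_i)$ does not depend on the chosen representative. The remaining steps are direct generalizations of the $k=2$ arguments.
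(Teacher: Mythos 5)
Your proposal is correct and follows essentially the same route as the paper: decompose $(V_q)^k$ via the orbit decomposition of Lemma~\ref{lem: Higher moment decomp into oribts}, force $\lambda_1=1$ and $\lambda_i=\pm1$ from visibility via Proposition~\ref{prop:H_q_Characterization} and Equation~\ref{eq:gcdmult}, and normalize the spanning pair to $J_{n,m}$ by an element of $H_q$ (Lemmas~\ref{lem:N_q equality} and \ref{lem:Decompose_Into_SL_2_Z_Orbits}) to obtain the criterion of Equation~\ref{eq:alphabeta_Criterion} on $(\alpha_i,\beta_i)$. Your explicit Tonelli justification for interchanging the orbit sum with the integral, and your attention to the uniqueness of $m$ and the $m$-dependence of the $(\alpha,\beta)$ constraint, are points the paper leaves implicit but do not change the argument.
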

\begin{proof}
	By Lemma~\ref{lem: Higher moment decomp into oribts}, given $v \in (V_q)^k$, we must have $v \in LD_{\lambda}^V$ or $D_{n,\lambda,\alpha,\beta}^V$ for some $\lambda$ or $(n,\lambda,\alpha,\beta)$. 
	
	If $v \in LD_{\lambda}^V$, first note that the zero vector is not in $V_q$. Hence the first entry in $\lambda$ must be $1$. Since the vectors are in $V_q$ and must be constant multiples of the first vector, all other vectors must be $\pm v_1$. Thus $\lambda$ must have the specified form.  
	
	Moving onto the linearly independent case, if $v \in D_{n,\lambda,\alpha, \beta}^V$, then we can write $v = (\lambda v_1, \alpha v_1 + \beta v_j)$ for some $1<j\leq k$ where $v_j$ is the first vector after $v_1$ which is not co-linear with $v_1$. Since all the vectors in $\lambda v_1 $ must be in $V_q$, $\lambda$ must have first entry $1$ and all other entries $\pm 1$. 
	
	Setting $\det(v_1\, v_j) = n$ by the definition of $N_q$ we have $n \in N_q$.
	
	Finally we need to determine the criterion for $\alpha = (0,\alpha_2,\ldots, \alpha_{k-j})$ and $\beta = (1,\beta_2, \ldots, \beta_{k-j})$. So that for all $1\leq i\leq k-j$ we have $\alpha_i v_1 + \beta_i v_j  \in V_q$. 
	
	By Lemma~\ref{lem:Decompose_Into_SL_2_Z_Orbits}, 
	there exists $\gamma \in H_q$ and $0\leq m < \lambda_q|n|$ with $(m,n)^T \in V_q$ so that $$\gamma \cdot (v_1|v_j) = \begin{bmatrix}
		1 & m \\ 0 & n
	\end{bmatrix}.$$
	So we have
	$$\gamma \cdot (\alpha_i v_1 + \beta_i v_j) = \begin{bmatrix}
		\alpha_i + m \beta_i \\ n\beta_i \end{bmatrix}.$$
	Since $H_q$ acts transitively on $V_q$, $\alpha_i v_1 + \beta_i v_j \in V_q$ is equivalent to
	$$\begin{bmatrix}
		1 & m \\ 0 & n
	\end{bmatrix} \begin{bmatrix}		\alpha_i \\ \beta_i \end{bmatrix} = \begin{bmatrix}
	\alpha_i + m\beta_i \\ n \beta_i
\end{bmatrix} \in V_q.$$
Multiplying by the inverse of $J_{n,m}$ we see $\alpha$ and $\beta$ must satisfy equation~\ref{eq:alphabeta_Criterion}.
	
	So for each $n$, there exists $m$ with $0\leq m< \lambda_q |n|$ so that $(m,n)^T \in V_q$. Given this $m$, we already have the requirement for $\lambda$, and the $\alpha$ and $\beta$ must satisfy Equation~\ref{eq:alphabeta_Criterion}. This concludes the proof. 
\end{proof}

Now that we have decomposed $\int_{Y_q} \hat{f} \,d\mu$, the higher moments case is complete once we prove the following two lemmas.

\begin{lemma}\label{lem:Higher Moment lin. dependent}
Given the restrictions of $\lambda$ in Lemma~\ref{lem:Higher moment decomp},
	$$T_{LD_\lambda^V} = \frac{1}{c(q)} \int_{\R^2} f(\lambda v) \,dv.$$
\end{lemma}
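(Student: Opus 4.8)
The plan is to collapse the $k$-fold diagonal sum over $LD_\lambda^V$ into an ordinary one-vector Siegel--Veech sum and then quote Theorem~\ref{thm:Veech}, in direct analogy with the proof of Lemma~\ref{lem:Siegel_Linearly_Dependent_Case}.

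First I would pin down the set $LD_\lambda^V = LD_\lambda \cap (V_q)^k$ precisely. Writing $\lambda = (1,\lambda_2,\ldots,\lambda_k)$ with each $\lambda_i \in \{\pm 1\}$ as dictated by Lemma~\ref{lem:Higher moment decomp}, a point $\lambda x = (x,\lambda_2 x,\ldots,\lambda_k x)$ lies in $(V_q)^k$ precisely when $x \in V_q$ and $\lambda_i x \in V_q$ for every $i$. Since $S^2 = -I \in H_q$ we have $-V_q = V_q$, so $\lambda_i x \in V_q$ holds automatically once $x \in V_q$; conversely, by Equation~\ref{eq:gcdmult} and Proposition~\ref{prop:H_q_Characterization}, no other scalar multiple of a visible vector is visible, which is exactly the reason $\lambda$ is constrained to $\pm 1$ entries in the first place. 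This gives the clean identification $LD_\lambda^V = \{\lambda x : x \in V_q\}$.

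With this in hand, I would set $\bar{f}(u) = f(\lambda u)$, which is an element of $B_c(\R^2)$. Because $SL(2,\R)$ acts diagonally and commutes with scalar multiplication, $g \cdot \lambda x = \lambda(gx)$, so
$$T_{LD_\lambda^V}(f) = \int_{Y_q} \sum_{x \in V_q} f(\lambda\, gx)\,d\mu([g]) = \int_{Y_q} \sum_{x \in V_q} \bar{f}(gx)\,d\mu([g]) = \int_{Y_q} \hat{\bar{f}}(g)\,d\mu([g]),$$
where $\hat{\bar{f}}$ is the classical ($k=1$) Siegel--Veech transform. Applying Theorem~\ref{thm:Veech} to $\bar{f}$ then yields $\frac{1}{c(q)}\int_{\R^2}\bar{f}(x)\,dx = \frac{1}{c(q)}\int_{\R^2} f(\lambda x)\,dx$, which is the claimed formula.

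The content of the argument is almost entirely the set-theoretic identification of the first step; once $LD_\lambda^V$ is recognized as the single $V_q$-orbit $\{\lambda x : x \in V_q\}$, the remainder is a bookkeeping reduction to the $k=1$ case already handled in Lemma~\ref{lem:Siegel_Linearly_Dependent_Case}. I do not anticipate a genuine obstacle, since the visibility of scalar multiples is fully controlled by Proposition~\ref{prop:H_q_Characterization}, and the only point requiring a moment's care is verifying that $g$ commutes with the scalars $\lambda_i$ so that the auxiliary function $\bar{f}$ absorbs the entire diagonal.
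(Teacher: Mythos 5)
Your proof is correct and is exactly the argument the paper intends: its proof of this lemma is the one-line remark that the strategy is identical to Lemma~\ref{lem:Siegel_Linearly_Dependent_Case}, which is precisely the reduction you carry out (define $\bar{f}(u)=f(\lambda u)$, use $g\cdot\lambda x=\lambda(gx)$, and apply Theorem~\ref{thm:Veech}). Your identification $LD_\lambda^V=\{\lambda x: x\in V_q\}$ via $S^2=-I\in H_q$ (so $-V_q=V_q$), with injectivity of $x\mapsto\lambda x$ from the first entry being $1$, fills in the only detail the paper leaves implicit.
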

\begin{proof}
	The proof strategy is identical to the strategy in Lemma~\ref{lem:Siegel_Linearly_Dependent_Case}
\end{proof}

\begin{lemma}\label{lem:Higher moment lin.independent}
Given the restrictions of $n,m,\lambda,\alpha, \beta$ in Lemma~\ref{lem:Higher moment decomp}
	$$T_{E_{n,\lambda,\alpha,\beta}^{(m)}} = \frac{1}{c(q)} \int_{SL(2,\R)}f\left(\lambda g \begin{bmatrix} 1\\0\end{bmatrix}, g J_{n,m}\begin{bmatrix}
		\alpha \\ \beta
\end{bmatrix}		   \right)\,d\eta(g).$$

\end{lemma}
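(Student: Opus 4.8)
The plan is to run the unfolding argument of Lemma~\ref{lem:Fibration_On_One_Orbit} essentially verbatim, now applied to the single $H_q$-orbit $E_{n,\lambda,\alpha,\beta}^{(m)} = H_q\cdot b$ with base point
$$b = \left(\lambda \begin{bmatrix} 1 \\ 0 \end{bmatrix},\ J_{n,m}\begin{bmatrix} \alpha \\ \beta \end{bmatrix}\right)\in(\R^2)^k.$$
First I would unwind the definitions of section~\ref{sec:Notation}, writing
$$T_{E_{n,\lambda,\alpha,\beta}^{(m)}}(f) = \int_{Y_q}\ \sum_{v\in E_{n,\lambda,\alpha,\beta}^{(m)}} f(g\cdot v)\ d\mu([g]),$$
where $g$ acts diagonally on the $k$-tuple $v$. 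To turn the sum over the orbit into a sum over $H_q$, I must check that $H_q$ acts freely on $b$. This is where the constraints from Lemma~\ref{lem:Higher moment decomp} do the work: since $\lambda_1=1$ and $(\alpha_1,\beta_1)=(0,1)$, the tuple $b$ contains the two components $\begin{bmatrix}1\\0\end{bmatrix}$ and $J_{n,m}\begin{bmatrix}0\\1\end{bmatrix}=\begin{bmatrix}m\\n\end{bmatrix}$, and these are linearly independent because $\det J_{n,m}=n\neq 0$. Any $\gamma\in H_q$ fixing $b$ fixes both vectors, hence fixes a basis of $\R^2$ and equals the identity; so $\gamma\mapsto \gamma b$ is a bijection $H_q\to E_{n,\lambda,\alpha,\beta}^{(m)}$ and the sum reindexes as $\sum_{\gamma\in H_q} f(g\gamma\cdot b)$.

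Next I would unfold the quotient integral exactly as in Lemma~\ref{lem:Fibration_On_One_Orbit}. Using $\pi_*\!\left(c(q)^{-1}\eta\right)=\mu$ for the projection $\pi\colon SL(2,\R)\to Y_q$, the $H_q$-periodic integrand folds to give
$$T_{E_{n,\lambda,\alpha,\beta}^{(m)}}(f) = \frac{1}{c(q)}\int_{SL(2,\R)} f(g\cdot b)\ d\eta(g).$$
Then I would simply spell out the diagonal action componentwise: each $\lambda_i\begin{bmatrix}1\\0\end{bmatrix}$ becomes $\lambda_i\, g\begin{bmatrix}1\\0\end{bmatrix}$ and each $J_{n,m}\begin{bmatrix}\alpha_i\\\beta_i\end{bmatrix}$ becomes $g J_{n,m}\begin{bmatrix}\alpha_i\\\beta_i\end{bmatrix}$, so that $f(g\cdot b)$ is precisely the integrand $f\!\left(\lambda g\begin{bmatrix}1\\0\end{bmatrix},\, gJ_{n,m}\begin{bmatrix}\alpha\\\beta\end{bmatrix}\right)$ of the claimed formula. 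This yields the lemma.

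There is no deep obstacle here; the one point requiring care is precisely the step that differs from the $k=2$ case. In Lemma~\ref{lem:Fibration_On_One_Orbit} the right translation $g\mapsto g\begin{bmatrix}1 & (1-m)/n\\0 & 1\end{bmatrix}$ (legitimate by unimodularity of $\eta$) was used to normalize $J_{n,m}$ to $J_n=J_{n,1}$. I would explicitly \emph{not} attempt that reduction: the same substitution simultaneously acts on the linearly dependent components $\lambda_i\, g\begin{bmatrix}1\\0\end{bmatrix}$ and on the remaining independent components, so it does not simplify the full $k$-tuple, and the dependence on $m$ genuinely persists in the higher-moment formula. The only real bookkeeping is keeping the block structure of $b$ straight—its first $j$ entries built from $\lambda$ and its last $k-j$ entries from $(\alpha,\beta)$—both when verifying freeness of the $H_q$-action and when performing the final componentwise identification of $g\cdot b$.
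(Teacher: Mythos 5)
Your proposal is correct and takes essentially the same route as the paper, whose proof of this lemma simply invokes the unfolding argument of Lemma~\ref{lem:Fibration_On_One_Orbit} and notes, exactly as you do, that the change of variables normalizing $J_{n,m}$ to $J_n$ is unavailable because the admissible $(\alpha,\beta)$ depend on $m$. Your explicit verification that $H_q$ acts freely on the base tuple (via the linearly independent components $\begin{bmatrix}1\\0\end{bmatrix}$ and $\begin{bmatrix}m\\n\end{bmatrix}$, using $n\neq 0$) is a detail the paper leaves implicit, but the argument is the same.
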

\begin{proof}
	This proof strategy is identical to the proof in Lemma~\ref{lem:Fibration_On_One_Orbit}. Note we cannot use the change of variables to get equal contribution for each $T_{E_n^{(m)}}$ as in the case of Lemma~\ref{lem:Fibration_On_One_Orbit} because the criterion for which $\alpha$ and $\beta$ can occur depends on $m$. 
\end{proof}

Combining Lemma~\ref{lem:Higher moment decomp} with Lemma~\ref{lem:Higher Moment lin. dependent} and Lemma~\ref{lem:Higher moment lin.independent}, we have now concluded the proof of Theorem~\ref{thm: Higher Moments}.

\section{Numerical evidence}	\label{sec:NumEv}
This section discusses how to interpret Theorem~$\ref{thm: Higher Moments}$ in terms of a counting problem. We will focus on the case $k =2$, that is Theorem~\ref{thm:The_Big_Kahuna}. The following proposition is from section 16 of \cite{Veech98}.
\begin{prop}
	For $B(0,R)$ the ball of radius $R$ in $\R^2$, 
	$$\lim_{R\to\infty} \frac{\#\{V_q \cap B(0,R)\}}{\pi R^2}  = \frac{1}{c(q)}.$$
\end{prop}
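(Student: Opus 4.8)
\noindent\emph{Proof proposal.} First I would observe that the count in question is a single value of the Siegel--Veech transform. Writing $f_R := \chi_{B(0,R)} \in B_c(\R^2)$ for the indicator of the ball and $e_1 := (1,0)^T$, so that $V_q = H_q\cdot e_1$, we have
$$\#\{V_q\cap B(0,R)\} = \sum_{v\in V_q} f_R(v) = \hat{f_R}([I]),$$
where $[I]\in Y_q$ is the identity coset. Applying the mean value formula (Theorem~\ref{thm:Veech}) with $f=f_R$ gives, for \emph{every} $R$,
$$\frac{1}{\pi R^2}\int_{Y_q}\hat{f_R}([g])\,d\mu([g]) = \frac{1}{c(q)}.$$
Thus the proposed constant is exactly the $\mu$-average of the normalized counting function, and the whole content of the proposition is to upgrade this average to the pointwise value at $[I]$.

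To carry out that upgrade, the plan is to compare $\hat{f_R}([I])$ with a local average over a small neighborhood of $[I]$. Concretely, I would fix a nonnegative $\psi\in C_c(Y_q)$ supported in a neighborhood $\mathcal O_\varepsilon$ of $[I]$ with $\int_{Y_q}\psi\,d\mu=1$. For $g$ close to $I$ the sets $B(0,R)$ and $g^{-1}B(0,R)$ differ only inside an annulus of width $O(\varepsilon R)$, so
$$\bigl|\hat{f_R}([g]) - \hat{f_R}([I])\bigr| \le \#\{V_q\cap A_{\varepsilon,R}\}, \qquad A_{\varepsilon,R} := B(0,(1+C\varepsilon)R)\setminus B(0,(1-C\varepsilon)R).$$
Integrating this against $\psi$ sandwiches $\hat{f_R}([I])$ between the $\psi$-weighted averages of the counting functions for slightly larger and slightly smaller radii, up to the annulus error $\#\{V_q\cap A_{\varepsilon,R}\}$.

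The two remaining inputs are an asymptotic for the $\psi$-weighted average and a bound for the annulus error, and here lies the main obstacle. Both reduce to showing that $\tfrac{1}{\pi R^2}\hat{f_R}$ converges to $\tfrac{1}{c(q)}$ not merely in $\mu$-average but when tested against a fixed continuous weight; this is precisely equidistribution of the expanding family $\{B(0,R)\}$ in the orbit $V_q=H_q\cdot e_1$, which does \emph{not} follow from the mean value formula alone. I would establish it from the dynamics of the $SL(2,\R)$-action on the finite-volume quotient $Y_q$: passing to polar coordinates via the identification $\R^2_0\cong SL(2,\R)/N$ used in Section~\ref{sec:decomp_orbits}, counting orbit points in $B(0,R)$ becomes counting $H_q$-translates in an expanding region of $SL(2,\R)$, and mixing of the geodesic flow on $Y_q$ (Howe--Moore, valid since $H_q$ is a lattice) yields the Eskin--McMullen type equidistribution with density $1/c(q)$. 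The annulus bound follows from the same statement applied to $A_{\varepsilon,R}$, whose area is $O(\varepsilon R^2)$; letting $R\to\infty$ and then $\varepsilon\to0$ gives the claim.

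As a consistency check, and as a partial result obtainable purely from the formulas in this paper, I would apply the second moment formula (Theorem~\ref{thm:The_Big_Kahuna}) to $f(x,y)=f_R(x)f_R(y)$ to compute the variance of $\hat{f_R}$ over $Y_q$. The linearly dependent term contributes $\tfrac{2\pi R^2}{c(q)}$, while the $D_n$ sum reproduces $(\pi R^2/c(q))^2$ to leading order; verifying that the variance is therefore of lower order than $(\pi R^2)^2$ shows $\tfrac{1}{\pi R^2}\hat{f_R}\to\tfrac{1}{c(q)}$ in $L^2(Y_q)$, hence $\mu$-almost everywhere along a subsequence. This is consistent with, though strictly weaker than, the pointwise statement at $[I]$, which is why the mixing input of the previous paragraph is the essential step.
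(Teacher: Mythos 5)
The paper offers no proof of this proposition at all: it is imported verbatim from section 16 of \cite{Veech98}. So your proposal cannot be matched against an internal argument, and must be judged on its own. Its architecture is sound and is in fact the standard modern route to such counting statements: identify the count as $\hat{f_R}([I])$, note that Theorem~\ref{thm:Veech} only gives the $\mu$-average of $\hat{f_R}/\pi R^2$, and upgrade to the pointwise value at $[I]$ by sandwiching against smoothed local averages (well-roundedness of balls plus the annulus error). You are also right, and commendably explicit, that the mean value formula alone cannot close the argument and that an equidistribution input is the essential missing ingredient; your final paragraph correctly concedes that the second-moment/variance route via Theorem~\ref{thm:The_Big_Kahuna} yields only $L^2$ and subsequential a.e.\ convergence, not the pointwise statement at $[I]$ --- and in any case that computation would need inputs the paper only establishes for $q=3$ (the asymptotic $\int_{SL(2,\R)} f(gJ_n)\,d\eta \sim \pi^2 R^2/n$ comes from Schmidt and Newman in Section~\ref{sec:NumEv}, and nothing is proved here about the summatory behavior of $\varphi_q(n)/n$ over $N_q$ for general $q$).

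The one genuine soft spot is the decisive step itself. You invoke ``Howe--Moore mixing yields the Eskin--McMullen type equidistribution,'' but the Eskin--McMullen duality theorem is stated for affine symmetric varieties, i.e.\ for orbits whose stabilizer is a \emph{symmetric} subgroup, whereas the stabilizer of $(1,0)^T$ in $SL(2,\R)$ is the horospherical subgroup $N$ (this is exactly the identification $\R^2_0 \cong SL(2,\R)/N$ you cite). For a horospherical stabilizer the naive unfolding-plus-mixing argument does not apply as stated; one needs Margulis' thickening of the horosphere or, equivalently, equidistribution of expanding horocycles on $Y_q$ (Dani--Smillie, Sarnak, or Veech's own Eisenstein-series computation for Hecke groups), together with some care near the cusp since $H_q$ is a nonuniform lattice --- here the boundedness of $\hat{f_R}$ on $Y_q$ (Lemma~16.10 of \cite{Veech98}) helps. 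These are all classical results, so the gap is one of attribution and detail rather than a fatal error: the theorem you cite does not literally cover this case, but the correct substitute exists and your sandwich scheme then goes through exactly as you describe.
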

We will use the notation $\#\{V_q \cap B(0,R)\} \sim \frac{\pi R^2}{c(q)}$ where $f(R) \sim g(R)$ if and only if $\lim_{R\to\infty} f(R)/g(R) = 1$.

In the case $q= 3$ this can be interpreted as the probability a randomly chosen integer vector is primitive is $\frac{1}{\zeta(2)}$. See Figure~\ref{fig:Densities_Integers} and Figure \ref{fig:Densities} for visualization of the points of $V_q$ for $q=3,4,5$. 

To construct the set $V_q$, we used a Farey tree construction in the first quadrant and then used the 4-fold symmetry of $V_q$, that is $(a,b) \in V_q$ implies $(a,-b), (-a,b), (-a,-b) \in V_q$. The generalization of the Farey tree construction as found in \cite{Lang_Lang} begins with the vectors 
$$\begin{bmatrix}
	1 \\ 0
\end{bmatrix} = \begin{bmatrix}
	1 & 0 \\ 0 & 1
\end{bmatrix}\cdot \begin{bmatrix}
1 \\ 0
\end{bmatrix} \in V_q \text{ and } \begin{bmatrix} 0 \\ 1 \end{bmatrix} = \begin{bmatrix}
	0 & -1 \\ 1 & 0
\end{bmatrix}\cdot \begin{bmatrix}
	1 \\ 0
\end{bmatrix} \in V_q.$$
 Then for $i = 2,\ldots, q-1$ we add the vectors
$$\begin{bmatrix}
	a_i \\ b_i
\end{bmatrix} = \begin{bmatrix}\lambda_q a_{i-1} - a_{i-2} \\  \lambda_q b_{i-1} - b_{i-2} \end{bmatrix}$$
where 
 $$\begin{bmatrix} a_1 \\  b_1 \end{bmatrix} = \begin{bmatrix} 1\\ 0 \end{bmatrix} \text{ and } \begin{bmatrix} a_0\\ b_0\end{bmatrix} = \begin{bmatrix}0 \\ -1\end{bmatrix}.$$ 
Iterating this step between each pair of adjacent vectors we obtain the elements of $V_q$.

\begin{figure}
	\centering
	\includegraphics[scale=.3]{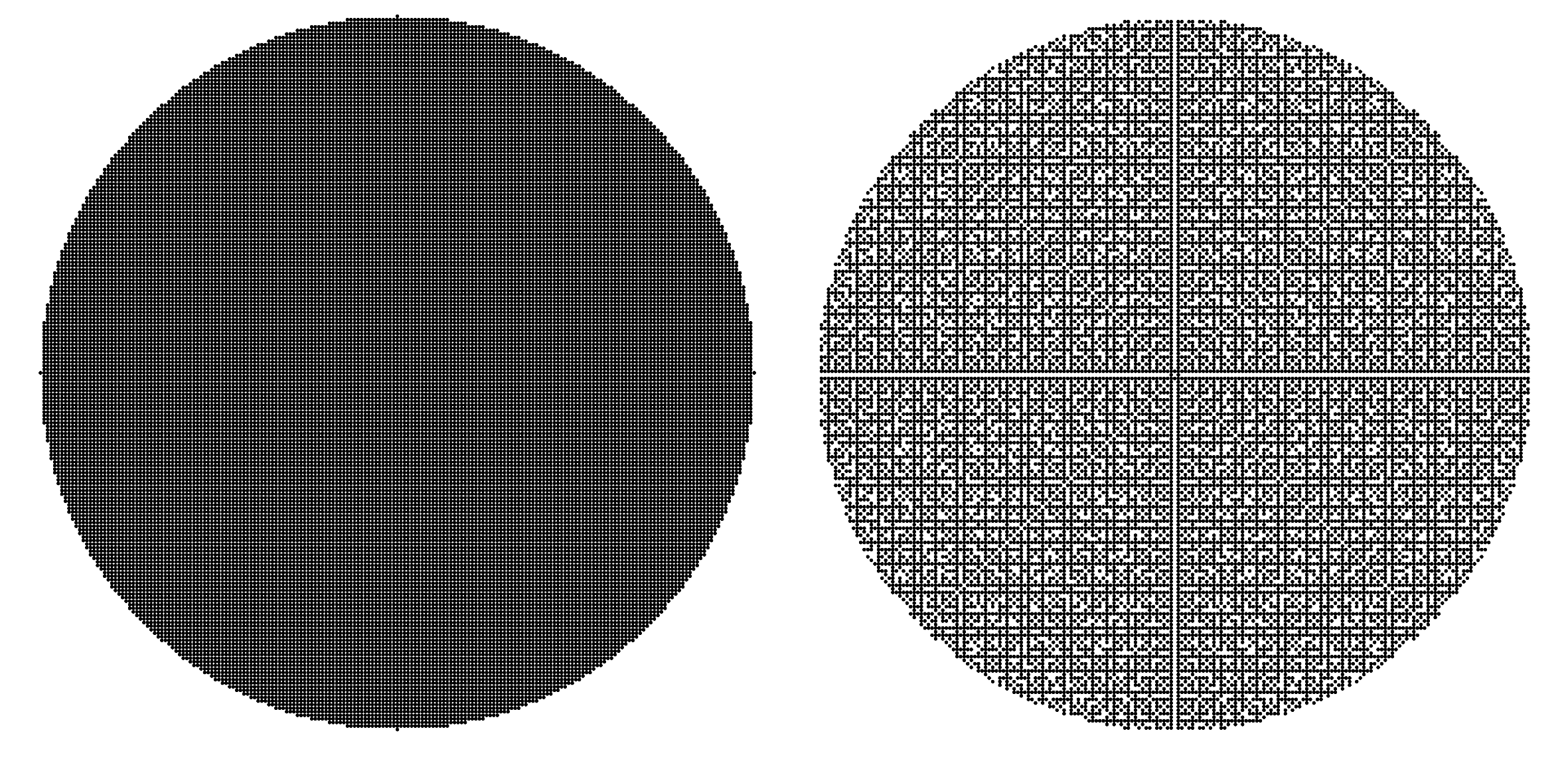}
	\caption{For $R = 100$, the left hand plot includes all integer pairs $(a,b)$ and the right hand plot includes all primitive integers pairs $(a,b)$ with $gcd(a,b) = 1$ where $a^2+b^2 \leq 100^2$. This demonstrated the expectations that for large $R$, the number of points on the left should be approximately $\pi 100^2$, but on the right the number of points should be $\frac{\pi 100^2}{\zeta(2)}$.}
	\label{fig:Densities_Integers}
\end{figure}
\begin{figure}
	\centering
	\includegraphics[scale=.8]{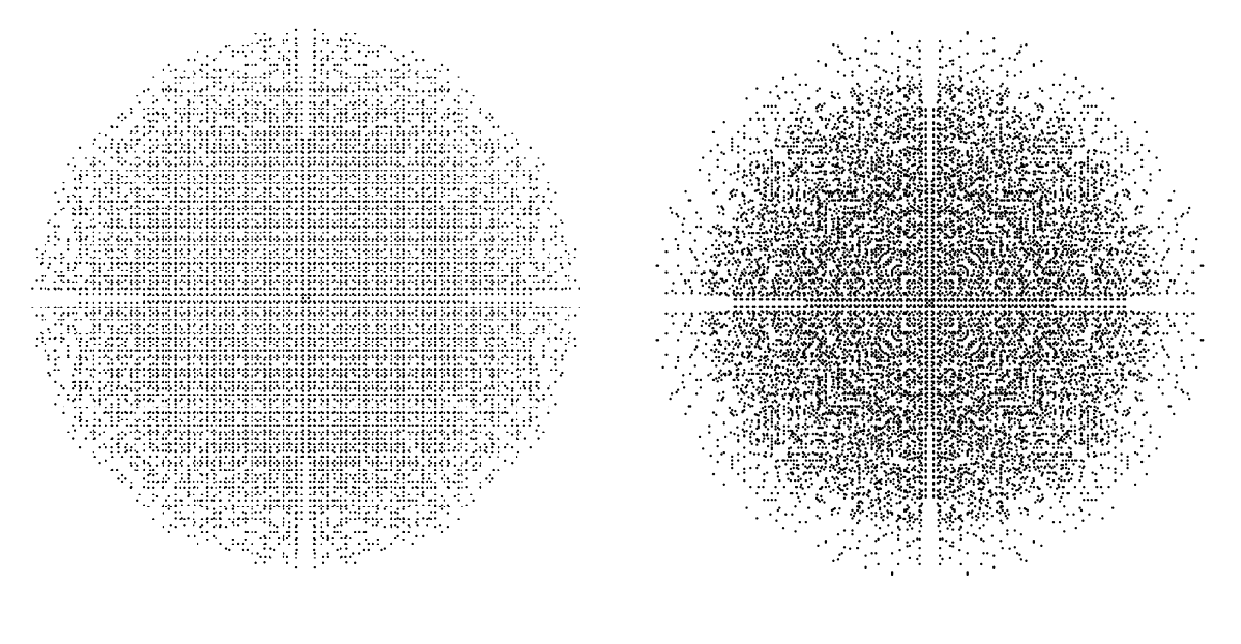} 
	\caption{On the left is a plot of the vectors $V_4$, and on the right is a plot of the vectors $V_5$. These plots were generated using the Farey Tree construction.}
	\label{fig:Densities}
\end{figure}

Now that we've generated plots for $V_q$, we can now count pairs of elements in $V_q$ corresponding to the square of the Siegel--Veech transform. Specifically for $f = \chi_{B_{\R^4}(0,R)}$ the characteristic function of the Euclidean ball in $\R^4$, we want to understand $T_{D_n^{V}}(f)$ which will asymptotically grow like the function
$$\cnt_q(R,n) \stackrel{def}{=}\#\left\{ \begin{bmatrix}
	a & b \\ c & d
\end{bmatrix} \in D_n^V : \, a^2 + b^2 + c^2 + d^2 \leq R^2\right\}.$$

Theorem~\ref{thm:The_Big_Kahuna} states that 
$$\frac{\cnt_q(R,n)}{\int_{SL(2,\R)} f\left(g J_n\right) \,d\eta}\sim \frac{\varphi_q(n)}{c(q)},$$
which is not useful for understanding data without more knowledge about $\int_{SL(2,\R)} f(gJ_n)\,d\eta$. 

Newman \cite{Newman} showed that $\cnt_3(R,1) \sim 6 R^2$. In particular combining with Theorem~\ref{thm:The_Big_Kahuna}, we obtain
$$\int_{SL(2,\R)} f(gJ_1) \, d\eta \sim \pi^2 R^2.$$

Next using the result of Schmidt \cite{Schmidt60}, we can extend this result to the fact that when $q= 3$,
$$\int_{SL(2,\R)} f\left(g J_n\right) \,d\eta \sim \frac{\pi^2}{n}R^2 .$$

Thus we deduce that for any $q \geq 3$,
$$\frac{\cnt_q(R,n)}{R^2} \sim \frac{1}{c(q)} \cdot \varphi_q(n) \cdot \frac{\pi^2}{n} = \frac{\varphi_q(n)\cdot  \pi^2}{n \cdot c(q)}.$$ 

Indeed in our numerical experiments we obtained the desired results. In Figure \ref{fig:Count}, we show the convergence for $k = 1,2,3,4$. Recall $D_n^V$ can be decomposed into $\varphi_q(n)$ orbits $E_n^{(m)}$ where $0\leq m < \lambda_q|n|$, and on each orbit we were able to verify we had density asymptotic to $\frac{\pi^2}{n \cdot c(q)}$ as desired. Finally in Figure~\ref{fig:pairs}, we provide a visualization for pairs of elements in $V_q$ for $q=3$ and $q=5$.

\begin{figure}
	\centering
	\includegraphics[scale=1]{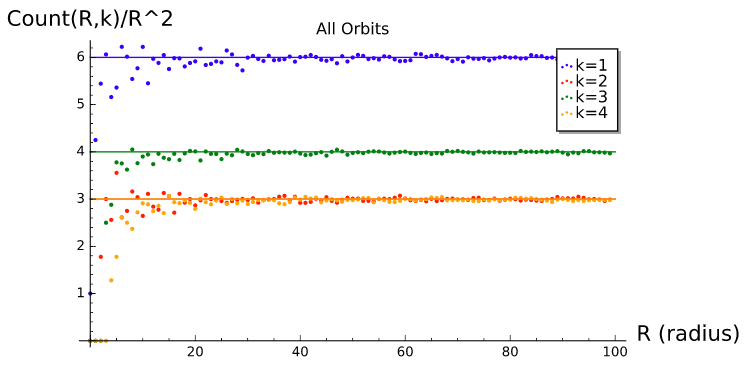}
	\caption{For $q= 3$ this is a plot of $R$ on the horizontal axis, and a plot of $\frac{\cnt(R,k)}{R^2}$ on the vertical axis. Notice that $k=2$ and $k= 4$ both converge to $3$ since $\frac{6\varphi(4)}{4} = \frac{6 \varphi(2)}{2} = 3$.}
	\label{fig:Count}
\end{figure}
\begin{figure}
	\centering
	\includegraphics[scale=1]{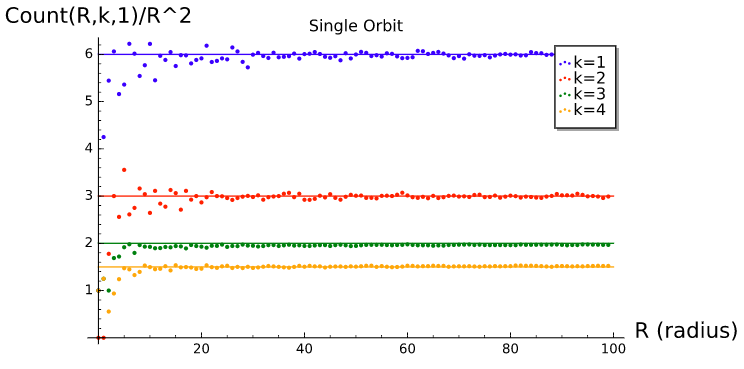}
	\caption[]{For $q= 3$, a plot of $R$ on the horizontal axis, and a plot of $\frac{\cnt(R,n,1)}{R^2}$ on the vertical axis, where $\cnt(R,n,1)$ is number of elements within the ball of radius $R$, which are in the orbit $SL(2,\Z)\cdot J_n$.}
	\label{fig:CountOrbit}
\end{figure}

\begin{figure}
	\centering
	\includegraphics[scale=.6]{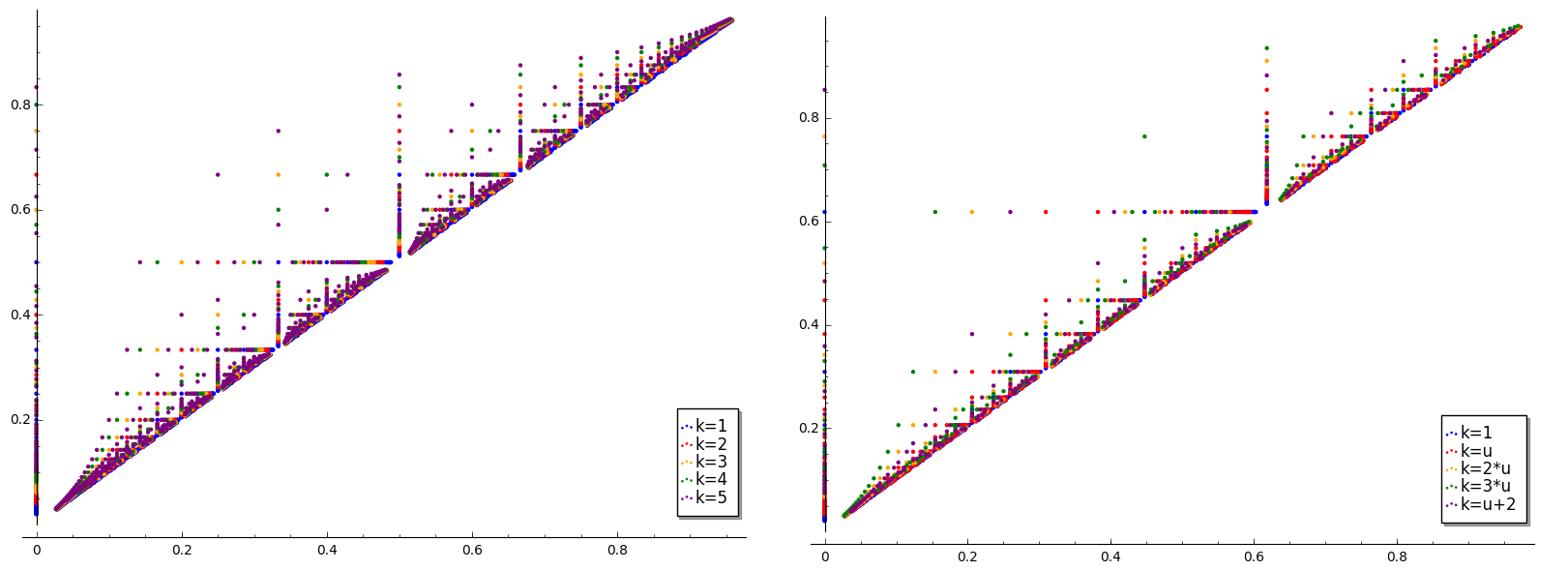}
	\captionsetup{singlelinecheck=false}
	\caption{For each element $(a,b)$ in $V_q$, we can visualize these points by considering $a/b$, which for simplicity we will consider the pairs with $a/b \in [0,1]$. These pictures plot the points in $D_k^V$ by placing a point if the point $a/b$ on the $x$-axis and the point $c/d$ on the $y$-axis have the corresponding pairs $(a,b)$ and $(b,c)$ in $D_k^V$. On the left are pairs of primitive vectors, and on the right are pairs of vectors in $V_5$ where in the legend $u = \phi $, the golden ratio.}
	\label{fig:pairs}
\end{figure}

\bibliographystyle{alpha}
\bibliography{Sources}
\end{document}